\begin{document}
\newcommand{\pa}{\partial}
\newcommand{\CI}{C^\infty}
\newcommand{\dCI}{\dot C^\infty}
\newcommand{\Hom}{\operatorname{Hom}}
\newcommand{\supp}{\operatorname{supp}}
\newcommand{\Op}{\operatorname{Op}}
\renewcommand{\Box}{\square}
\newcommand{\ep}{\epsilon}
\newcommand{\Ell}{\operatorname{Ell}}
\newcommand{\WF}{\operatorname{WF}}
\newcommand{\WFb}{\operatorname{WF}_{\bl}}
\newcommand{\WFsc}{\operatorname{WF}_{\scl}}
\newcommand{\diag}{\mathrm{diag}}
\newcommand{\sign}{\operatorname{sign}}
\newcommand{\Ker}{\operatorname{Ker}}
\newcommand{\Ran}{\operatorname{Ran}}
\newcommand{\Span}{\operatorname{Span}}
\newcommand{\sX}{\mathsf{X}}
\newcommand{\sH}{\mathsf{H}}
\newcommand{\sC}{\mathsf{C}}
\newcommand{\sk}{\mathsf{k}}
\newcommand{\codim}{\operatorname{codim}}
\newcommand{\Id}{\operatorname{Id}}
\newcommand{\id}{\operatorname{id}}
\newcommand{\cl}{{\mathrm{cl}}}
\newcommand{\piece}{{\mathrm{piece}}}
\newcommand{\bl}{{\mathrm b}}
\newcommand{\scl}{{\mathrm{sc}}}
\newcommand{\Psib}{\Psi_\bl}
\newcommand{\Psibc}{\Psi_{\mathrm{bc}}}
\newcommand{\Psibcc}{\Psi_{\mathrm{bcc}}}
\newcommand{\Psisc}{\Psi_\scl}
\newcommand{\Diff}{\mathrm{Diff}}
\newcommand{\Diffsc}{\Diff_\scl}
\newcommand{\PP}{\mathbb{P}}
\newcommand{\BB}{\mathbb{B}}
\newcommand{\RR}{\mathbb{R}}
\newcommand{\Cx}{\mathbb{C}}
\newcommand{\NN}{\mathbb{N}}
\newcommand{\R}{\mathbb{R}}
\newcommand{\sphere}{\mathbb{S}}
\newcommand{\codimY}{k}
\newcommand{\dimX}{n}
\newcommand{\cO}{\mathcal O}
\newcommand{\cS}{\mathcal S}
\newcommand{\cP}{\mathcal P}
\newcommand{\cF}{\mathcal F}
\newcommand{\cL}{\mathcal L}
\newcommand{\cH}{\mathcal H}
\newcommand{\cG}{\mathcal G}
\newcommand{\cU}{\mathcal U}
\newcommand{\cM}{\mathcal M}
\newcommand{\cT}{\mathcal T}
\newcommand{\cX}{\mathcal X}
\newcommand{\cY}{\mathcal Y}
\newcommand{\cR}{\mathcal R}
\newcommand{\loc}{{\mathrm{loc}}}
\newcommand{\comp}{{\mathrm{comp}}}
\newcommand{\Tb}{{}^{\bl}T}
\newcommand{\Sb}{{}^{\bl}S}
\newcommand{\Nb}{{}^{\bl}N}
\newcommand{\Tsc}{{}^{\scl}T}
\newcommand{\Ssc}{{}^{\scl}S}
\newcommand{\Vf}{\mathcal V}
\newcommand{\Vb}{{\mathcal V}_{\bl}}
\newcommand{\Vsc}{{\mathcal V}_{\scl}}
\newcommand{\Lambdasc}{{}^{\scl}\Lambda}
\newcommand{\etat}{\tilde\eta}
\newcommand{\scH}{{}^{\scl}H}
\newcommand{\Hsc}{H_{\scl}}
\newcommand{\Hscloc}{H_{\scl,\loc}}
\newcommand{\Hscd}{\dot H_{\scl}}
\newcommand{\Hscb}{\bar H_{\scl}}
\newcommand{\ff}{{\mathrm{ff}}}
\newcommand{\inter}{{\mathrm{int}}}
\newcommand{\Sym}{\mathrm{Sym}}
\newcommand{\be}[1]{\begin{equation}\label{#1}}
\newcommand{\ee}{\end{equation}}

\newcommand{\cutsphere}{\mathrm{PS}}

\newcommand{\level}{\mathsf{c}}
\newcommand{\foliation}{\mathsf{x}}
\newcommand{\loccoord}{y}
\newcommand{\Foliation}{\mathsf{X}}
\newcommand{\Loccoord}{\mathsf{Y}}

\newcommand{\bo}{\partial M} 
\newcommand{\zero}{^{(0)}}
\renewcommand{\r}[1]{(\ref{#1})} 
\newcommand{\mat}[4]{\left(\begin{array}{cc} #1 &#2\\#3 & #4 
\end{array}\right)}
\renewcommand{\d}{\mathrm{d}} 
\newcommand{\dsymm}{\mathrm{d}^{\mathrm{s}}}
\newcommand{\dsymmw}{\mathrm{d}^{\mathrm{s}}_\digamma}
\newcommand{\dsymmY}{\mathrm{d}^{\mathrm{s}}_Y}
\newcommand{\dsymmsc}{\mathrm{d}^{\mathrm{s}}_\scl}
\newcommand{\dsymmscw}{\mathrm{d}^{\mathrm{s}}_{\scl,\digamma}}

\setcounter{secnumdepth}{3}
\newtheorem{lemma}{Lemma}[section]
\newtheorem{prop}{Proposition}[section]
\newtheorem{proposition}{Proposition}[section]
\newtheorem{thm}{Theorem}[section]
\newtheorem{cor} {Corollary}[section]
\newtheorem{result}[lemma]{Result}
\newtheorem*{thm*}{Theorem}
\newtheorem*{prop*}{Proposition}
\newtheorem*{cor*}{Corollary}
\newtheorem*{conj*}{Conjecture}
\numberwithin{equation}{section}
\theoremstyle{remark}
\newtheorem{rem}{Remark}[section]
\newtheorem{remark} {Remark}[section]
\newtheorem*{rem*}{Remark}
\theoremstyle{definition}
\newtheorem{Def}{Definition}[section]
\newtheorem*{Def*}{Definition}

\newcommand{\mar}[1]{{\marginpar{\sffamily{\scriptsize #1}}}}
\newcommand\av[1]{}
\newcommand\gu[1]{}
\newcommand\mdh[1]{}

\title{Recovery of material parameters in transversely isotropic media}

\author[Maarten V. de Hoop, Gunther Uhlmann and Andras Vasy]{Maarten
  V. de Hoop, Gunther Uhlmann and Andr\'as Vasy}

\date{\today}

\address{Departments of Computational and Applied Mathematics, and Earth
  Sciences, Rice University, Houston, TX 77251-1892, U.S.A.}
\email{mdehoop@rice.edu}
\address{Department of Mathematics, University of Washington, 
Seattle, WA 98195-4350, U.S.A., University of Helsinki and Institute for Advanced Study,
HKUST, Clear Water Bay, Hong Kong, China}
\email{gunther@math.washington.edu}
\address{Department of Mathematics, Stanford University, Stanford, CA
94305-2125, U.S.A.}
\email{andras@math.stanford.edu}
\subjclass{35R30, 53C65, 35A17, 35S05, 53B40}

\begin{abstract}
In this paper we show that in anisotropic elasticity, in the
particular case of transversely isotropic media, under appropriate
convexity conditions, knowledge of the qSH
wave travel times determines the tilt of the axis of isotropy as well
as some of the elastic material parameters, and the knowledge of qP
and qSV travel times conditionally determines a subset of the
remaining parameters, in the sense if some of the remaining parameters
are known, the rest are determined, or if the remaining parameters
satisfy a suitable relation, they are all determined, under certain
non-degeneracy conditions. Furthermore, we give a precise description
of the additional issues, which are a subject of ongoing work, that
need to be resolved for a full treatment.
\end{abstract} 

\maketitle

\section{Introduction}

In this paper we show that in anisotropic elasticity, in the
particular case of transversely isotropic media, under appropriate
convexity conditions, knowledge of the qSH
wave travel times determines the tilt of the axis of isotropy as well
as some of the elastic material parameters, and the knowledge of qP
and qSV travel times conditionally determines a subset of the
remaining parameters, in the sense if some of the remaining parameters
are known, the rest are determined, or if the remaining parameters
satisfy a suitable relation, they are all determined, under certain
non-degeneracy conditions. Furthermore, we give a precise description
of the additional issues, which are a subject of ongoing work, that
need to be resolved for a full treatment.

The problem addressed in this paper has one of its primary application in
seismic tomography. In Earth's interior, the presence of anisotropy
has been widely recognized. In a classical (review) paper, Silver
described the seismic anisotropy beneath the continents
\cite{Silver-1996}. More recently, Romanowicz and Wenk
\cite{Romanowicz-Wenk-2017} described anisotropy in the deep
interior. The assumption of transverse isotropy with tilted symmetry
axis has played a dominant role in many studies ranging from Earth's
sedimentary basins, continental dynamics and subduction, deep mantle
flow and inner core.

The fundamental result of this paper is that the spatially varying
symmetry axis of a transversely isotropic elastic medium can be
locally recovered, under certain geometric conditions. However, in the
present analysis, the full recovery of elastic parameters requires some
interrelationships between them. Such relationships may be best
motivated by considering models that effectively generate these
parameters; these then provide possible physically, mechanically or
geologically based reductions of independent parameters. We briefly
mention a selection of examples of modeling procedures of this kind,
omitting references to a vast literature on the subject: (i)
differential effective medium theories, for which we refer to Norris
\cite{Norris-Callegari-Sheng-1985}; effective-medium-averaging
techniques to estimate the effective properties of a random sphere
pack while considering contact laws for adhesive contacts, rough
contacts, and smooth contacts, which were developed by Digby
\cite{Digby-1981} and Walton \cite{Walton-1987}, which later
culminated in the modeling of elastic properties of shales
\cite{Hornby-Schwartz-Hudson-1994} in sedimentary basins; (ii)
(sedimentary) layering-induced anisotropy in a simple calculus
formulated by Schoenberg and Muir \cite{Schoenberg-Muir-1989}, and
more general shape preferred orientation (SPO), considered by Garnero
and Moore \cite{Garnero-Moore-2004}, in its most basic form
originating from the study of a deformable elastic ellipsoid in a
far-field loaded matrix with different properties by Eshelby
\cite{Eshelby-1957, Eshelby-1959}; and (iii) strain-induced formation
of lattice preferred orientation (LPO). Indeed in Earth's upper mantle
it is generally accepted that seismic anisotropy results from LPO
produced by dislocation creep of olivine \cite{Zhang-Karato-1995},
while the mechanisms causing anisotropy in the inner core are still
under debate.

In order to state the results precisely, we work in an invariant
setting based on Riemannian geometry since this enables a cleaner and
conceptually clearer statement. Thus, there is a given background
metric $g_0$, which is typically the Euclidean metric; we denote the
dual metric and the dual metric function by $G_0$. In general,
anisotropic elasticity is described by a system whose principal
symbol, a tensor (matrix)-valued function on phase space, i.e., the
cotangent bundle, is non-scalar, i.e. is not a multiple of the
identity map. It can be diagonalized; the eigenvalues are the speed of
the various elastic waves. In isotropic elasticity, there are two
kinds of waves, P and S waves, with S waves corresponding (in spatial
dimension 3) to a multiplicity 2 (and P waves a simple) eigenvalue. In
anisotropic elasticity typically the S wave eigenspace is broken up,
at least in most parts of the cotangent bundle.  In transversely
isotropic elasticity there is a preferred axis, with respect to which
the principal symbol is rotationally symmetric relative to the
background metric $G_0$ lifted to the cotangent bundle. There are
three waves then, the qP waves, as well as the qSV and qSH waves, with
the latter corresponding to the `breaking up' of the S-waves. Of
these, the qSH waves behave much like in {\em isotropic} elasticity in
the sense that they are given the dual metric function of a Riemannian
metric, while the qP and qSV waves have a very different character.

One common parameterization of transversely isotropic materials, see \cite{Auld:Acoustic,Tsvankin:Seismic}, is via
the material constants $a_{11},a_{13},a_{33},a_{55}$ and $a_{66}$,
which are functions on the underlying manifold. In addition, there is
an axis of isotropy, which can be encoded by a vector field, or better
yet a one form $\omega$. The qSH `energy function' (dual metric
function) then depends on $a_{55},a_{66}>0$ and $\omega$.  Concretely,
using orthogonal coordinates relative to the metric $g_0$ (with $G_0$
the dual metric), and aligning the axis of isotropy with the third
coordinate axis, which is possible at any given point, the wave speed
of the qSH waves is given by a (squared!) Riemannian dual metric
$$
G=G_{qSH}=a_{66}(x)|\xi'|^2+a_{55}(x)\xi_3^2=a_{66}(x) G_0+(a_{55}(x)-a_{66}(x))\xi_3^2.
$$
This corresponds to a Riemannian metric
$$
g=g_{qSH}=a_{66}(x)^{-1}\, |dx'|^2+a_{55}(x)^{-1} \,dx_3^2=a_{66}(x)^{-1} g_0+(a_{55}(x)^{-1}-a_{66}(x)^{-1})\,dx_3^2,
$$
again at the point in question.
Thus, invariantly it has the form
$$
g=\alpha g_0+(\beta-\alpha)\omega\otimes\omega,
$$
i.e., the metric is a rank one perturbation of a conformal multiple of
the background (say, Euclidean) metric, with
$\alpha=a_{66}^{-1},\beta=a_{55}^{-1}$ functions on the base
manifold. Note that here $\beta-\alpha$ could be incorporated into
$\omega$ up to a sign; this formulation keeps the sign unspecified,
but then one should keep in mind that only the direction of $\omega$
matters. There is another reason to keep this form as explained below. Note
also that $g$ determines the span of $\omega$ if $\beta\neq\alpha$,
for the kernel of $\omega$ is well-defined (at any point in the
manifold) as the 2-dimensional subspace of the tangent space
restricted to which $g$ is a constant multiple of $g_0$.

Now, under appropriate assumptions, e.g.\ locally near the strictly
convex boundary, a Riemannian metric, $g$, can
be recovered from its boundary distance function up to
diffeomorphisms, as shown by Stefanov, Uhlmann and Vasy, recalled here in
Section~\ref{sec:sh}, see
\cite{Stefanov-Uhlmann-Vasy:Rigidity-Normal} for details. More
precisely, see Section~\ref{sec:sh}, the local determination indeed only uses the boundary distance
function, while the global result uses the lens relation, which also
keeps track of the direction of the geodesics at the two points on the
boundary at which they enter and exit the domain; in many cases these
are equivalent. Thus, if we know the qSH
wave travel times, then in fact we know $g$ above up to
diffeomorphisms (which are the identity at the boundary). A natural
question is whether this arbitrary diffeomorphism freedom is present
in our problem for the qSH wave travel times.

Formal dimension counting indicates that the space of Riemannian
metrics is 6 dimensional at each point, that of vector fields (or one
forms) is 3 dimensional at each point, so formally the space of
Riemannian metrics modulo diffeomorphisms is 3 dimensional. Now,
above, $\alpha,\beta$ are arbitrary functions, and $\omega$ is
arbitrary but only its direction matters, which means that the
parameter space at each point is 4 dimensional. This indicates that it
is unlikely that one can recover these four parameters from knowing
the corresponding Riemannian metric {\em up to diffeomorphisms}.

On the other hand, if one assumes that $\omega$ satisfies additional
conditions, this pointwise parameter space can be cut down, and the
problem may become formally determined. For instance, if $\omega$
always lies in the $dx_1$-$dx_3$ plane, this would be the case. (Note
that this includes the case when $\omega=dx_3$, in which case the
pointwise above form holds at least locally.)

An important property of a one-form, such as $\omega$, is its
integrability, or more precisely whether its kernel is an integrable
hyperplane distribution, which means that $\Ker\omega$ is the tangent
space of a smooth family of submanifolds, which are thus locally level
sets of a function $f$, so $\omega$ is a smooth multiple of $df$.  In
this case,
$$
   g=\alpha g_0+\gamma\,df\otimes\,df.
$$
In geological terms, this corresponds to a layered material with
layers given by the level sets of $f$. The integrability condition is
natural though not globally (that is, on planetary scale). LPO is one
mechanism that is consistent with this assumption while sedimentary
processes, compaction and deformation would yield the condition to
also hold true.

Our first theorem is:

\begin{thm}\label{thm:sh}
Consider the class of elastic problems in which $\Ker\omega=\Ker df$ is an integrable hyperplane
distribution on a manifold with boundary $M$, with $\omega$ not
conormal to $\pa M$ (so level sets of $f$ locally intersect $\pa M$ non-degenerately) and not orthogonal to $N^*\pa M$ relative to
$G_0$. Then, under the local, resp.\ global, convexity conditions for Riemannian
determination (up to diffeomorphisms), stated here in
Theorem~\ref{thm:local-impr} and Theorem~\ref{thm:global} of
Section~\ref{sec:sh}, $f,\alpha,\beta$ are locally, resp.\ globally, determined by the qSH
travel times, resp.\ qSH lens relations, and the labelling of the level sets of $f$ at the boundary.
\end{thm}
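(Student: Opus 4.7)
The plan is to chain the Riemannian boundary rigidity result with a pointwise algebraic extraction from the metric. First, apply Theorem~\ref{thm:local-impr} (resp.\ Theorem~\ref{thm:global}) to the qSH travel times (resp.\ lens relations): the Riemannian dual metric $G_{qSH}$ on the relevant domain is determined up to a diffeomorphism $\psi$ of $M$ satisfying $\psi|_{\pa M}=\id$. The remaining content of the theorem is thus the extraction of the triple $(f,\alpha,\beta)$ from $g=g_{qSH}$ together with the fixed background $g_0$ and the boundary labelling of level sets.

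For the pointwise extraction, consider the endomorphism $A=g_0^{-1}g$ at each point. From the ansatz $g=\alpha g_0+(\beta-\alpha)\,df\otimes df$ one finds $A=\alpha\,\id+(\beta-\alpha)\,df\otimes\nabla_{g_0}f$, whose eigenvalues with respect to $g_0$ are $\alpha$ on $\Ker df$ (with multiplicity $n-1$) and $\alpha+(\beta-\alpha)|df|_{g_0}^2$ on the $g_0$-orthogonal complement (with multiplicity one). At every point where $\alpha\neq\beta$ this reads off $\alpha$ unambiguously and identifies the line $[df]\subset T^*M$ as the dual of the simple eigendirection. The integrability hypothesis $\omega=df$ then upgrades the codimension-one distribution $\Ker\omega$ to a smooth foliation of $M$ by hypersurface leaves.

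Next, recover $f$ itself from this foliation and the boundary data. The condition that $\omega$ is not conormal to $\pa M$ guarantees that the leaves meet $\pa M$ transversely, and the $G_0$-non-orthogonality condition on $\omega$ and $N^*\pa M$ provides the further transversality so that each leaf's trace on $\pa M$ is a smooth codimension-two submanifold on which the boundary labelling assigns a single value. Propagating this value along the leaves defines $f$ as a smooth function on the recovery region. With $f$ in hand, $|df|_{g_0}^2$ is immediate, so the simple eigenvalue of $A$ yields $\beta$, completing the recovery of $(f,\alpha,\beta)$ from $g$.

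The main obstacle is reconciling this pointwise recipe with the diffeomorphism ambiguity of the first step: the extraction just described relies on comparing $g$ to the fixed background $g_0$, which is not invariant under $\psi^*$. Concretely, if both $g$ and $\psi^*g$ are of the form $\alpha g_0+\gamma\,df\otimes df$ with integrable $df$, then the tensorial identity
\[
(\alpha\circ\psi)\,\psi^*g_0-\alpha'\,g_0=\gamma'\,df'\otimes df'-(\gamma\circ\psi)\,d(f\circ\psi)\otimes d(f\circ\psi)
\]
forces $\psi^*g_0$ to differ from a conformal multiple of $g_0$ by a symmetric tensor of rank at most two. In dimension $n\geq 3$ this is a stringent constraint on $\psi$, and combined with $\psi|_{\pa M}=\id$ together with the boundary non-degeneracy hypotheses on $\omega$, I expect it to force $\psi$ to be trivial on the recovery region, so that the triple $(f,\alpha,\beta)$ is independent of the representative. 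Making this rigidity precise, in particular propagating $\psi=\id$ from $\pa M$ into the interior by exploiting the transversality of the foliation to the boundary, is the step I expect to demand the most care.
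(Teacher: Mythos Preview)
Your outline correctly isolates the structure of the argument: apply Riemannian boundary rigidity to obtain $g_{qSH}$ up to a diffeomorphism $\psi$ fixing $\pa M$, then extract $(f,\alpha,\beta)$ pointwise from $g$ via the eigenstructure of $g_0^{-1}g$. The pointwise extraction is fine and matches the paper. You also correctly identify that the whole difficulty lies in eliminating the diffeomorphism ambiguity, and you are honest that you have not done this.

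However, the route you sketch toward rigidity---the rank-at-most-two constraint on $\psi^*g_0$ modulo conformal multiples of $g_0$---is too weak to close the argument, and is not how the paper proceeds. That constraint is satisfied by a large class of diffeomorphisms and does not obviously propagate $\psi=\id$ inward from $\pa M$. The paper instead exploits a much sharper consequence of the hypothesis that \emph{both} $g$ and $\tilde g=\psi^*g$ have the special form. Since $g$ intrinsically determines $\Ker df$ (as the codimension-one subspace on which $g$ is a multiple of $g_0$) and likewise $\tilde g$ determines $\Ker d\tilde f$, the relation $\tilde g=\Phi^*g$ forces $D\Phi$ to carry $\Ker df$ to $\Ker d\tilde f$ and the $g_0$-orthocomplement to the $g_0$-orthocomplement. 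One then builds adapted coordinates $(y_1,y_2,y_3)$ with $y_3=f$ and $\pa_{y_1},\pa_{y_2}$ tangent to the leaves, using integral curves of $\nabla^{g_0}f$ (here the hypothesis that $\omega$ is not $G_0$-orthogonal to $N^*\pa M$ lets one initialize $y_1,y_2$ on $\pa M$); similarly $(\tilde y_1,\tilde y_2,\tilde y_3)$ for $\tilde f$. In these coordinates $D\Phi$ is block-diagonal, which integrates to $\Phi_1,\Phi_2$ independent of $y_3$ and $\Phi_3$ independent of $y_1,y_2$. Matching $y_1,y_2$ with $\tilde y_1,\tilde y_2$ on $\pa M$ then makes the $(12)$-block the identity there, hence everywhere; and since $\omega$ is not conormal to $\pa M$, the leaves hit $\pa M$, so the boundary labelling forces $\Phi_3$ to be the identity as well. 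This is the missing mechanism in your proposal: preservation of the foliation \emph{and} its orthocomplement, not a rank bound, is what makes the diffeomorphism split and then collapse.
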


Thus, there is {\em no} diffeomorphism freedom in this problem,
unlike for the boundary rigidity problem in Riemannian geometry.

Since the qSH-wave speed does not depend on the remaining material
parameters, $a_{11},a_{13},a_{33}$, in order to go further we need to
consider qSV and qP waves.
Now, at a point with coordinates $g_0$-orthogonal at the point and such that the isotropy axis is
aligned with the $\tilde x_3$ axis  the Hamiltonians for the other waves take the form (with $\pm$ corresponding to
the choice of qP vs.\ qSV, and $G$ being twice what gives the actual
wave speeds)
\begin{equation}\begin{aligned}\label{eq:G-qP-qSV}
G_{qP/qSV}=(a_{11}+a_{55})|\tilde\xi'|^2&+(a_{33}+a_{55})\tilde\xi_3^2\\
&\pm\sqrt{\big((a_{11}-a_{55})|\tilde\xi'|^2+(a_{33}-a_{55})\tilde\xi_3^2\big)^2-4E^2|\tilde\xi'|^2\tilde\xi_3^2},
\end{aligned}\end{equation}
where
$$
E^2=(a_{11}-a_{55})(a_{33}-a_{55})-(a_{13}+a_{55})^2,
$$
see \cite{Schoenberg-deHoop:Approximate}.
(We will make the physically natural assumption that
$\max\{a_{55},a_{66}\}<\min\{a_{11},a_{33}\}$ throughout the paper.)
If one uses another coordinate system, $x_j$, often chosen orthogonal
at the point in question, and corresponding dual variables $\xi_j$,
the actual wave speed is given by the corresponding change of
variables. Thus, these wave speeds are no longer given by a quadratic
polynomial in the fibers of the cotangent bundle, and thus are not the
wave speeds of a Riemannian metric {\em unless $E=0$}, i.e., $E$
measures the departure from the Riemannian case (which is sometimes
called the `elliptic case' due to the quadratic polynomial
nature). (One can say that they are the wave speeds of a co-Finsler
metric due to the homogeneity with respect to dilations in the fibers
of the cotangent bundle, cf.\ \cite{Dahl:Finsler} for the terminology and for a detailed discussion.) Correspondingly, the Riemannian result,
\cite{Stefanov-Uhlmann-Vasy:Rigidity-Normal}, is not
applicable. Nevertheless, the analysis of that paper is based on the
study of a class of transforms which are microlocally weighted X-ray
transforms along curves, and even these general wave speeds fall in
this class, with the techniques introduced by Uhlmann and Vasy
\cite{Uhlmann-Vasy:X-ray} being applicable.

Following \cite{Uhlmann-Vasy:X-ray}, in this paper we work with a
function on $M$ with strictly convex level sets, and localize to
super-level sets of this function. We show that the modified and
localized `normal operators' that arise from the Stefanov-Uhlmann
pseudolinearization formula, which is valid for all Hamiltonian flows
and goes back to \cite{Stefanov-Uhlmann:Rigidity}, are scattering
pseudodifferential operators in Melrose's scattering
pseudodifferential algebra \cite{RBMSpec}, with the level set of the
function at which we stop playing the role of the boundary. (Thus,
this {\em artificial boundary} is the only one with analytic
significance, while the original boundary of $M$ simply constrains
supports.) In this algebra, whose properties are summarized in \cite[Section~3.2]{Stefanov-Uhlmann-Vasy:Rigidity-Normal}, there are two different (and somewhat
coupled) notions of ellipticity: that of the standard principal symbol
and that of the boundary principal symbol; the boundary principal
symbol at infinity in the fibers of the (scattering) cotangent bundle
is the same as the standard principal symbol at the boundary,
explaining the coupling. The standard principal symbol corresponds to
differentiable regularity, the boundary principal symbol to decay.

Now, there are three quantities we would still like to determine:
$a_{11},a_{33}$ and $E$, and we have two different wave speeds, the
qSV and the qP waves that we can use. While ideally one would like to
determine all of these at the same time, it is at this point natural
to formulate a theorem in which two of these three are regarded as
known, and the the third as unknown. Due to multiple points in the
cotangent space potentially corresponding to the same tangent vector
via the Hamilton map (a phenomenon that causes `wave triplication'),
we make an additional non-degeneracy hypothesis for the material, for
which we refer to Definition~\ref{def:non-deg-transverse}.

\begin{thm} \label{thm:p-sv}
Assume that the hypotheses of Theorem~\ref{thm:sh} hold, and that
$\nabla f$ is neither parallel, nor orthogonal to the artificial
boundary with respect to $g_0$. (This is automatic near $\pa M$ if the convex function is a
boundary defining function for $\pa M$, or a sufficiently small
perturbation of such.) Assume moreover that the transversely isotropic material is non-degenerate relative to a
convex foliation if qSV data are used below, with convexity of the
foliation always understood with respect to $G_{qP}$, resp.\
$G_{qSV}$, if qP, resp. qSV data are used below.

Then the modified and localized `normal operators'
arising from the Stefanov-Uhlmann formula are in Melrose's scattering
pseudodifferential operator algebra. Furthermore, the boundary
principal symbol is elliptic at {\em finite} points of the scattering
cotangent bundle for any one of $E^2,a_{11},a_{33}$ from the qP travel
data, and for $E^2$ (as well as $a_{11}$ and $a_{33}$ if $E^2>0$) from
the qSV travel data. Furthermore, for $a_{11}$ from the qP-travel time
data standard principal symbol ellipticity also holds.
\end{thm}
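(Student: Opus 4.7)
The plan is to start from the Stefanov--Uhlmann pseudolinearization identity applied to the qP, respectively qSV, Hamiltonians in \eqref{eq:G-qP-qSV}. Writing $G_s = (1-s)G^{(0)} + s G^{(1)}$ for a linear interpolation between the Hamiltonian of a reference material and that of the unknown, the identity expresses the difference of lens relations as the integral along the $s$-family of bicharacteristics of the $s$-parameter derivative of $G_s$, evaluated at the endpoints determined by the lens data. The non-degeneracy hypothesis of Definition~\ref{def:non-deg-transverse} is exactly what excludes pathological multi-branch bicharacteristic configurations (the wave-triplication phenomenon) and allows this identity to be read as a single weighted X-ray type transform along the flow curves, with weight proportional to the derivative of $G$ in the unknown parameter.

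Next, following \cite{Uhlmann-Vasy:X-ray} and \cite{Stefanov-Uhlmann-Vasy:Rigidity-Normal}, I would pair this transform with an appropriate formal adjoint, localized to the super-level sets $\{\foliation>\level\}$ of the strictly convex function and conjugated by the scattering exponential $e^{-F/x}$, where $x$ is a defining function for the artificial boundary $\{\foliation=\level\}$ and $F$ a scattering-smooth weight. The assumption that $\nabla f$ is neither parallel to nor orthogonal to the artificial boundary with respect to $g_0$ ensures that the $\omega$-dependent terms in the Hamiltonian produce a phase that is regular up to the artificial boundary, and the convexity of the foliation with respect to $G_{qP}$, resp.\ $G_{qSV}$, then places the resulting normal operator $N$ in Melrose's scattering algebra by the general argument of \cite{Uhlmann-Vasy:X-ray}. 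This yields the first conclusion of the theorem.

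Ellipticity reduces to an explicit stationary-phase computation of the boundary principal symbol at a finite scattering covector $\eta$. As is standard in this framework, the symbol equals (up to a smooth non-vanishing prefactor) a Fourier-type integral in a tangent-plane direction of the squared weight, restricted to the sphere of conormal directions to the bicharacteristic flow at the boundary. Differentiating \eqref{eq:G-qP-qSV} in $E^2$ gives a weight proportional to $\mp 2|\tilde\xi'|^2\tilde\xi_3^2/\sqrt{\Delta}$, where $\Delta$ denotes the quantity under the square root; differentiation in $a_{11}$ produces $|\tilde\xi'|^2\bigl(1\pm(a_{11}-a_{55})|\tilde\xi'|^2/\sqrt{\Delta}\bigr)$ up to lower-order $E^2$-corrections, and symmetrically for $a_{33}$. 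The physical assumption $\max\{a_{55},a_{66}\}<\min\{a_{11},a_{33}\}$, together with strict positivity of $\Delta$ away from a negligible set for qSV when $E^2>0$, ensures that these weights are smooth and not identically zero on the relevant sphere, so their Fourier transform is non-vanishing at finite $\eta$ by continuity and the standard even-parity argument. The distinctive feature of $a_{11}$ from qP data is that the unsigned leading term $|\tilde\xi'|^2$ survives also in the high-frequency regime, upgrading boundary-symbol ellipticity to standard principal symbol ellipticity as well. Note that for qSV with $E^2=0$, the corresponding $a_{11}$ weight degenerates to $|\tilde\xi'|^2(1\mp\sign(\cdot))$ which can vanish identically on half-spheres, explaining the hypothesis $E^2>0$ for $a_{11}$ and $a_{33}$ in the qSV case.

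The main obstacle I anticipate is controlling the square root in \eqref{eq:G-qP-qSV}: the weight $\partial_s G_s$ is algebraic, not polynomial, in the fiber variables, with a $1/\sqrt{\Delta}$ factor that is singular on the wave-triplication locus $\{\Delta=0\}$. One must verify that near this locus the weighted transform still defines an operator in the scattering calculus -- the singularity is of integrable type, so the Schwartz kernel remains conormal, but the stationary-phase calculation of the symbol must be performed carefully away from triplication and then extended across it. The non-degeneracy hypothesis of Definition~\ref{def:non-deg-transverse} is designed precisely to separate the qP and qSV sheets of the characteristic variety, so that the pseudolinearization consistently selects one branch, and it is this separation that allows the ellipticity argument to proceed uniformly in the scattering calculus.
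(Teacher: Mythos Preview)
Your overall architecture---Stefanov--Uhlmann pseudolinearization, then the Uhlmann--Vasy normal operator construction placing $e^{-\digamma/x}LJe^{\digamma/x}$ in the scattering calculus---matches the paper's, and your material-derivative formulas are essentially right (though the $a_{11}$ derivative is $|\tilde\xi'|^2\bigl(1\pm((a_{11}-a_{55})|\tilde\xi'|^2+(a_{33}-a_{55})\tilde\xi_3^2)/\sqrt\Delta\bigr)$ exactly, not an approximation ``up to $E^2$-corrections''; the full expression matters for the sign analysis).

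The genuine gap is in your ellipticity argument. You write that the weights ``are smooth and not identically zero on the relevant sphere, so their Fourier transform is non-vanishing at finite $\eta$ by continuity and the standard even-parity argument.'' This is not the mechanism, and as stated it does not work: a smooth non-vanishing function on the sphere can certainly have vanishing integrals against oscillatory or Gaussian kernels. Moreover, neither principal symbol here is a Fourier transform of the weight. The boundary principal symbol at finite points is a \emph{Gaussian-weighted integral} over $\sphere^1$ (as in \cite{Stefanov-Uhlmann-Vasy:Boundary}), and the standard principal symbol is a positive-measure integral over the stationary-phase critical set $\{\hat t=0,\ \zeta_3\hat\lambda+\zeta'\cdot\omega=0\}$.

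What actually makes both integrals non-zero is that each material derivative has a \emph{fixed sign}: $\frac{\pa p_+}{\pa a_{11}}$ is a positive-definite multiple of $|\tilde\xi'|^2$, $\frac{\pa p_+}{\pa a_{33}}$ of $\tilde\xi_3^2$, $\frac{\pa p_\pm}{\pa E^2}$ of $\mp|\tilde\xi'|^2\tilde\xi_3^2$, and (only when $E^2>0$) $\frac{\pa p_-}{\pa a_{11}}$, $\frac{\pa p_-}{\pa a_{33}}$ are everywhere non-positive. An integral of a sign-definite function against a strictly positive weight is non-zero as soon as the function is strictly non-zero at a single point of the integration domain. This is where the hypotheses on $\nabla f$ actually enter---not for ``phase regularity'' as you suggest, but because: (i) $\nabla f$ not parallel to the artificial boundary forces $\tilde\xi'\neq 0$ for \emph{every} covector arising as $H_x^{-1}$ of a tangent vector to the boundary; (ii) $\nabla f$ not orthogonal forces $\tilde\xi_3\neq 0$ at \emph{some} such covector. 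Together these guarantee strict positivity (or negativity) of the weight somewhere on the integration domain. The distinction you want between $a_{11}$ (standard-symbol elliptic from qP) and $a_{33},E^2$ (only boundary-symbol elliptic at finite points) is precisely that $|\tilde\xi'|^2$ never vanishes on the critical set by (i), whereas $\tilde\xi_3^2$ can vanish on the \emph{entire} critical set when $\zeta$ lies in the span of $df$.

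Finally, you misread Definition~\ref{def:non-deg-transverse}: it is not about separating qP and qSV sheets or about integrability of the weight near $\Delta=0$, but about invertibility of the differential of the Hamilton map $\xi\mapsto H_x(\xi)$, so that bicharacteristics can be parameterized by \emph{tangent} vectors (this is what makes the cutoff $\chi(\lambda/x)$ and the stationary-phase reduction meaningful). The square-root singularity you worry about does not arise on the region where the symbol is evaluated.
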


Note that the assumption that $df$ is not conormal to the artificial
boundary, i.e., $\nabla f$ is not orthogonal to it, means that the
span of $df$ has a non-degenerate image in the scattering cotangent
bundle; if $\rho$ defines the artificial boundary, this is that of the
scattering one-form $\rho^{-1} \, df$.

An immediate corollary, using the methods of \cite{Uhlmann-Vasy:X-ray,
  Stefanov-Uhlmann-Vasy:Boundary}, is:

\begin{cor}
Suppose that we are given the qSH-travel time data so that $\omega$,
$a_{55}$ and $a_{66}$ are determined already, and assume that the
hypotheses of Theorem~\ref{thm:p-sv} hold. Given $E^2$ and $a_{33}$,
the material parameter $a_{11}$ can be recovered from qP-travel time
data.
\end{cor}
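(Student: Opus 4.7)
The plan is to combine Theorem~\ref{thm:p-sv} with the Stefanov--Uhlmann pseudolinearization scheme and the local-then-global inversion strategy of \cite{Uhlmann-Vasy:X-ray, Stefanov-Uhlmann-Vasy:Boundary}. Concretely, let $a_{11}^{(1)}, a_{11}^{(2)}$ be two candidate coefficients producing the same qP travel time data, with all other parameters ($\omega, a_{55}, a_{66}, a_{33}, E^2$) identical and known. Writing $G_{qP}^{(j)}$ for the corresponding Hamiltonians of the form \r{eq:G-qP-qSV}, the Stefanov--Uhlmann identity expresses the vanishing of the difference of travel times as the vanishing, along every qP bicharacteristic connecting boundary points, of a weighted integral of $G_{qP}^{(1)}-G_{qP}^{(2)}$ lifted to phase space. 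Since only $a_{11}$ varies, this difference is a smooth function times $\delta a_{11}:=a_{11}^{(1)}-a_{11}^{(2)}$, so we obtain a generalized (microlocally weighted) X-ray transform $I\delta a_{11}=0$ along qP bicharacteristics.

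Next, following \cite{Uhlmann-Vasy:X-ray}, I would fix a strictly convex (for $G_{qP}$) level function $\mathsf{x}$ as provided by the hypotheses of Theorem~\ref{thm:p-sv}, restrict attention to superlevel sets $\{\mathsf{x}>\mathsf{c}\}$, and treat the level set $\{\mathsf{x}=\mathsf{c}\}$ as an artificial boundary. One localizes the pseudolinearized transform by a suitable scattering-type cutoff in the scattering cotangent bundle near this artificial boundary, and forms the modified normal operator $N_\mathsf{c}=I^*\chi I$. Theorem~\ref{thm:p-sv} asserts precisely that $N_\mathsf{c}$ belongs to Melrose's scattering pseudodifferential algebra $\Psi_\scl$, is elliptic in the standard principal symbol sense, and is also elliptic in the boundary principal symbol sense at finite points of the scattering cotangent bundle. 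Elliptic scattering calculus (cf.\ \cite[Section~3.2]{Stefanov-Uhlmann-Vasy:Rigidity-Normal}) then yields that $N_\mathsf{c}$ is Fredholm between the appropriate weighted scattering Sobolev spaces $\Hsc^{s,r}$ on $\{\mathsf{x}\ge\mathsf{c}\}$.

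To upgrade Fredholmness to invertibility, I would invoke the usual small-thickness argument of \cite{Uhlmann-Vasy:X-ray}: choosing the artificial boundary $\{\mathsf{x}=\mathsf{c}\}$ sufficiently close to the slice where one begins the recovery, the conormal piece of the operator contributes a perturbatively small term (after conjugation by an exponential weight), so the parametrix constructed from the two ellipticities inverts $N_\mathsf{c}$ modulo a contraction, giving injectivity on functions supported in the thin collar $\{\mathsf{c}<\mathsf{x}<\mathsf{c}+\epsilon\}$. From $I\delta a_{11}=0$ and this local injectivity, $\delta a_{11}$ vanishes in the collar. A standard layer stripping along the foliation $\{\mathsf{x}=\mathsf{c}\}$ then propagates the vanishing all the way through $M$, recovering $a_{11}$ uniquely from the qP travel times.

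The main obstacle is the layer stripping across the whole foliation rather than the single-slice inversion: at each step one must verify that the convex foliation hypothesis of Theorem~\ref{thm:p-sv} and the non-degeneracy condition (Definition~\ref{def:non-deg-transverse}) continue to hold uniformly, so that the scattering-calculus ellipticity and the small-parameter contraction estimate persist as $\mathsf{c}$ decreases. In particular, one must control the behavior of the qP bicharacteristic geometry at points where wave triplication could occur, and check that the localized normal operator remains elliptic down to the level where one exits $M$ through its actual boundary; the non-degeneracy assumption and the fact that we use standard principal symbol ellipticity for $a_{11}$ (not only boundary-principal-symbol ellipticity) are what make this uniform continuation possible.
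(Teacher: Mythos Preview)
Your proposal is correct and follows essentially the same route as the paper: the corollary is stated there as an ``immediate corollary, using the methods of \cite{Uhlmann-Vasy:X-ray, Stefanov-Uhlmann-Vasy:Boundary}'', and the argument is exactly the pseudolinearization plus modified normal operator ellipticity (Theorem~\ref{thm:p-sv}) plus the small-collar invertibility and layer stripping you outline; the paper packages the ellipticity step via Proposition~\ref{prop:J-tilde-J} (passing from $L\tilde J$ to $LJ$), but otherwise your sketch matches. One minor point: your final paragraph worries about wave triplication for qP bicharacteristics, but the paper notes (just before Definition~\ref{def:non-deg-transverse}) that for qP waves strict convexity of the slowness surfaces always holds, so the non-degeneracy hypothesis of Definition~\ref{def:non-deg-transverse} is only needed for qSV data and your concern is moot here.
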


Motivated by the discussion in the introduction on possible parameter
set reduction, by elimination we may invoke a functional relationship
where $a_{11}$ determines $a_{33}$ and $E^2$. This yields an
alternative to the corollary above:

\begin{cor} \label{thm:a11-fn}
Suppose that we are given the qSH-travel time data so that $\omega$,
$a_{55}$ and $a_{66}$ are determined already, and assume that the
hypotheses of Theorem~\ref{thm:p-sv} hold for both the qP and qSV
waves with the {\em same} convex foliation. Suppose also that we are
given $C^\infty$ functions $F,H:\RR\to\RR$ with $F'\geq 0$ such
that $a_{33} = F(a_{11})$ and $E^2 = H(a_{11})$. Then $a_{11}$ can be
recovered from the qP- and qSV-travel time data jointly.
\end{cor}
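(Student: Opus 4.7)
My plan is to reduce the problem to recovering a single unknown $u = a_{11}-\tilde a_{11}$ by exploiting the functional relations, and then to show that the resulting joint scattering normal operator built from qP and qSV travel time data is elliptic in both symbol senses of Melrose's calculus. The key structural input beyond Theorem~\ref{thm:p-sv} will be the observation, visible from the $\pm$ in \eqref{eq:G-qP-qSV}, that $\partial_{E^2}G_{qP}$ and $\partial_{E^2}G_{qSV}$ have opposite signs, so the qSV data can compensate for any cancellation that the unsigned $H'$ might cause in the qP data.

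First, I apply the Stefanov--Uhlmann pseudolinearization to each of the qP and qSV travel times, comparing the candidates $(a_{11},F(a_{11}),H(a_{11}))$ and $(\tilde a_{11},F(\tilde a_{11}),H(\tilde a_{11}))$. The functional constraints give $\delta a_{33}=\tilde F\cdot u$ and $\delta E^2=\tilde H\cdot u$, where $\tilde F,\tilde H$ are mean-value coefficients obtained by integrating $F',H'$ along the segment from $\tilde a_{11}$ to $a_{11}$; the hypothesis $F'\geq 0$ yields $\tilde F\geq 0$. Localizing to the super-level sets of the convex function $f$ exactly as in Theorem~\ref{thm:p-sv}, each data difference becomes $N_*^{\mathrm{eff}} u$ with
\begin{equation*}
N_*^{\mathrm{eff}} = N_{*,a_{11}} + \tilde F\,N_{*,a_{33}} + \tilde H\,N_{*,E^2},\qquad *\in\{qP,qSV\},
\end{equation*}
and each summand belongs to Melrose's scattering algebra by Theorem~\ref{thm:p-sv}.

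Next, I check ellipticity of the joint operator $\mathbf{N}=(N_{qP}^{\mathrm{eff}},N_{qSV}^{\mathrm{eff}})^\top$ at finite points of the scattering cotangent bundle, for both the standard principal symbol and the boundary principal symbol (the latter automatically handling fiber infinity over the artificial boundary via the coupling recalled in the excerpt). By Theorem~\ref{thm:p-sv} the qP row supplies $a_{11}$-ellipticity in both senses, while the qSV row supplies $E^2$-ellipticity; the nonnegative weight $\tilde F$ ensures the $a_{33}$-terms align rather than cancel with the $a_{11}$-terms wherever their symbols share a sign. The remaining danger is that the unsigned $\tilde H\,N_{*,E^2}$ piece could produce cancellation, and this is precisely where the opposite-sign structure of $\partial_{E^2}G_{qP/qSV}$ enters: cancellation in the qP row forces non-cancellation in the qSV row and vice versa. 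Passing to $\mathbf{N}^*\mathbf{N}$ (equivalently, examining both rows pointwise) therefore yields a fully elliptic scalar scattering pseudodifferential operator acting on $u$.

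With full ellipticity in hand, I invoke the Fredholm theory in Melrose's scattering calculus and the local-uniqueness/layer-stripping scheme used in \cite{Uhlmann-Vasy:X-ray,Stefanov-Uhlmann-Vasy:Boundary}, iterating along the convex foliation to conclude $u\equiv 0$ globally; the relations $a_{33}=F(a_{11})$ and $E^2=H(a_{11})$ then recover the remaining parameters. I expect the ellipticity check in the second step to be the main obstacle: the standard and boundary principal symbols must be handled separately, and the unsigned $H'$ leaves no simple way to form a manifestly positive combination from a single wave type, so the proof hinges on a careful sign analysis of the $E^2$-weighted symbols based directly on the $\pm$-structure in \eqref{eq:G-qP-qSV}.
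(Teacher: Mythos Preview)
Your reduction to a single unknown $u=a_{11}-\tilde a_{11}$ via the mean-value coefficients $\tilde F,\tilde H$ is correct and matches the paper's setup, but your ellipticity argument has a genuine gap, and the paper's route is both simpler and closes that gap in one line.

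The paper does not form the $2$-vector $\mathbf N=(N_{qP}^{\mathrm{eff}},N_{qSV}^{\mathrm{eff}})^\top$ and pass to $\mathbf N^*\mathbf N$; it simply \emph{sums} the qP and qSV pseudolinearizations. From \eqref{eq:material-derivs} one has $\partial_{a_{11}}p_\pm=|\tilde\xi'|^2(1\pm Q)$, $\partial_{a_{33}}p_\pm=\tilde\xi_3^2(1\pm Q)$ and $\partial_{E^2}p_\pm=\mp R$, so \emph{all} the $\pm$ pieces cancel in the sum, not only the $E^2$ one, and the effective boundary weight collapses to
\[
W_++W_-=2|\tilde\xi'|^2+2F'(a_{11})\,\tilde\xi_3^2\ \geq\ 2|\tilde\xi'|^2.
\]
This is exactly (twice) the weight that already appeared in the $a_{11}$-only recovery from qP data, so the ellipticity argument proved there (both standard and boundary principal symbols) applies verbatim to the summed transform. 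Note that $H'$ disappears from the boundary computation entirely; only the existence of $H$ is used.

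The gap in your argument is the sentence ``cancellation in the qP row forces non-cancellation in the qSV row,'' which you justify solely by the opposite signs of $\partial_{E^2}p_\pm$. That is only one of three sign flips: $\partial_{a_{11}}p_-=|\tilde\xi'|^2(1-Q)$ and $\partial_{a_{33}}p_-=\tilde\xi_3^2(1-Q)$ are typically \emph{negative} (whenever $E^2>0$, cf.\ Lemma~\ref{lemma:material-derivs}), so your claim that the $a_{33}$-terms ``align'' with the $a_{11}$-terms does not control the qSV row, and nothing you wrote excludes $\sigma(N_{qP}^{\mathrm{eff}})$ and $\sigma(N_{qSV}^{\mathrm{eff}})$ vanishing at the same scattering covector. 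The identity $W_++W_->0$ above is precisely what is needed: it gives $\sigma_++\sigma_->0$ after integration, hence $(\sigma_+,\sigma_-)\neq(0,0)$, and then your $\mathbf N^*\mathbf N$ argument becomes valid --- but at that point taking the plain sum is shorter and avoids the matrix operator entirely.
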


Finally, we show the precise nature of the obstruction to full
invertibility via elliptic analysis:

\begin{thm} \label{thm:double-chars}
For $a_{33},E^2$ from the qP or qSV travel data, as well as for $E^2$
and one of $a_{11}$ and $a_{33}$ jointly from the qP and qSV data, the
standard principal symbol is {\em not} elliptic, rather vanishes in a
non-degenerate quadratic manner along the span of $df$ at fiber
infinity in the scattering cotangent bundle.
\end{thm}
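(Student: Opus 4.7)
The plan is to make the standard principal symbol of the localized normal operators of Theorem \ref{thm:p-sv} explicit and to exhibit its vanishing along $\mathrm{span}(df)$ directly. From the Stefanov-Uhlmann pseudolinearization combined with the Uhlmann-Vasy scattering calculus used in the proof of Theorem \ref{thm:p-sv}, the standard principal symbol at a scattering covector $\eta\in\Tsc^*M$ at fiber infinity is, up to nonvanishing factors coming from the $e^{-F/\rho}$ conjugation and the measure on the transverse slice $\{\xi:\partial_\xi G(x,\xi)\perp\eta\}$ of the unit characteristic set of $G$, controlled by the values of $\partial_p G$ on this slice, either scalar-valued for a single parameter or as a $2\times 2$ matrix $\bigl(\partial_{p_i}G_{\mathrm{wave}_j}\bigr)_{i,j}$ in the joint case.

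First I would identify this slice at $\eta\in\mathrm{span}(df)$ at fiber infinity. Choosing orthogonal coordinates at the point in question with $df=d\tilde x_3$, so that \eqref{eq:G-qP-qSV} applies and $\eta=\eta_3\,d\tilde x_3$, the defining equation of the slice is $\partial_{\tilde\xi_3}G_{qP/qSV}(x,\xi)=0$; under the standing hypothesis $\max\{a_{55},a_{66}\}<\min\{a_{11},a_{33}\}$ this factors as $\tilde\xi_3$ times a strictly positive quantity on the characteristic set, so the slice is the horizontal plane $\tilde\xi_3=0$. On this slice the radical in \eqref{eq:G-qP-qSV} reduces to $(a_{11}-a_{55})|\tilde\xi'|^2$, and one reads off $G_{qP}|_{\tilde\xi_3=0}=2a_{11}|\tilde\xi'|^2$ and $G_{qSV}|_{\tilde\xi_3=0}=2a_{55}|\tilde\xi'|^2$.

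Next, direct differentiation of \eqref{eq:G-qP-qSV} restricted to $\tilde\xi_3=0$ yields
\[
\partial_{a_{33}}G_{qP/qSV}\big|_{\tilde\xi_3=0}=0,\quad \partial_{E^2}G_{qP/qSV}\big|_{\tilde\xi_3=0}=0,\quad \partial_{a_{11}}G_{qSV}\big|_{\tilde\xi_3=0}=0,
\]
while $\partial_{a_{11}}G_{qP}|_{\tilde\xi_3=0}=2|\tilde\xi'|^2$ remains nonzero (consistent with the standard principal symbol ellipticity for $a_{11}$ from qP in Theorem \ref{thm:p-sv}). Hence for each single-parameter case $p\in\{a_{33},E^2\}$ from qP or qSV data the slice integrand vanishes identically, so the standard principal symbol vanishes at $\eta\in\mathrm{span}(df)$; and in each of the joint cases, for $(E^2,a_{11})$ or $(E^2,a_{33})$ from the combined qP and qSV data, the relevant $2\times 2$ matrix has a zero column on the slice, forcing its determinant to vanish.

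For the non-degenerate quadratic order of vanishing I would perturb $\eta$ to $\eta=\eta_3\,d\tilde x_3+\eta'$ with small transverse $\eta'$, and apply the implicit function theorem to the slice-defining equation to get $\tilde\xi_3=c(x,\tilde\xi')\cdot\eta'+O(|\eta'|^2)$ with explicit nonzero $c$. Each of $\partial_{a_{33}}G$ and $\partial_{E^2}G$ factors near $\tilde\xi_3=0$ as $\tilde\xi_3^2$ times a strictly positive quantity (again by direct inspection of \eqref{eq:G-qP-qSV}), so the resulting symbol entries are $O(|\eta'|^2)$ with an explicit quadratic leading term in $\eta'$, and the physical inequalities on the $a_{ij}$ make this quadratic form non-degenerate. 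The main obstacle I expect is the precise symbolic bookkeeping within the scattering calculus --- in particular tracking how the conjugation and localizing cutoffs distribute factors of $\partial_p G$ and verifying that the transverse Hessian of the resulting symbol (not merely of $\partial_p G$) is non-degenerate --- together with checking that the non-parallel and non-orthogonal hypothesis on $\nabla f$, and the non-degeneracy of Definition \ref{def:non-deg-transverse}, keeps the slice away from the wave-triplication configurations where the factorization $\partial_{\tilde\xi_3}G\propto\tilde\xi_3$ could fail.
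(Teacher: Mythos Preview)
Your proposal is correct and follows essentially the paper's own argument: identify the set of covectors whose Hamilton vector is annihilated by $\eta$, observe that the material derivatives $\partial_{a_{33}}G_\pm$, $\partial_{E^2}G_\pm$ (and $\partial_{a_{11}}G_-$) carry a factor $\tilde\xi_3^2$ and hence vanish on the slice $\tilde\xi_3=0$ when $\eta\in\mathrm{span}(df)$, then perturb $\eta$ to extract the quadratic non-degeneracy. The paper phrases the perturbation step geometrically (angles between $\Ker\zeta_\epsilon$ and $\Ker df$, distances in annuli) rather than via the implicit function theorem, but the content is the same.

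Two points to tighten relative to the paper's version. First, your expansion $\tilde\xi_3=c(x,\tilde\xi')\cdot\eta'+O(|\eta'|^2)$ has $c$ varying over the slice parameter $\tilde\xi'$, and $c\cdot\eta'$ can vanish at individual points; since the principal symbol is an \emph{integral} over only the $\chi$-localized portion of the slice (tangent vectors close to tangent to the convex foliation), non-degeneracy of its Hessian in $\eta'$ requires that for each direction $\eta'$ there is a point on that localized portion with $c\cdot\eta'\neq 0$. This is exactly what the paper's geometric argument supplies, using that $df$ is not conormal to the foliation so that the line $\Ker df\cap T\{x=\mathrm{const}\}$ lies inside the cutoff region. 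Second, the factorization $\partial_{\tilde\xi_3}G_{qSV}=\tilde\xi_3\times(\text{positive})$ near $\tilde\xi_3=0$ is not a consequence of the standing inequality $\max\{a_{55},a_{66}\}<\min\{a_{11},a_{33}\}$ alone; it needs the hypothesis of Lemma~\ref{lemma:non-deg-diff-xi3}, which the paper explicitly invokes in both non-degeneracy lemmas. The boundary case you flag as an obstacle is handled in the paper by passing to the b-cotangent bundle via the adjoint map $\pi:T^*M\to\Tb^*M$ and rerunning the same geometric argument there.
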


The explanation of the lack of ellipticity is very simple: In general,
for the normal operator's standard principal symbol computation at a
point $\zeta\in T^*_xM$, one takes a weighted average of certain
quantities evaluated at covectors for which the Hamilton vector field
for the relevant wave speed is annihilated by $\zeta$. Now, if
$\zeta=df$ is in the axis direction, the tangent vectors involved in
the integration correspond to covectors in the $g_0$-orthogonal plane,
i.e. with vanishing $\tilde\xi_3$ coordinate, and there the qP and
qSV wave speeds are insensitive to $a_{33}, E^2$ as these appear with
a prefactor $\tilde\xi_3^2$ above. The quadratic non-degeneracy also
corresponds to this: namely the relevant coefficient is a
non-degenerate multiple of $\tilde\xi_3^2$.

This means that the analytic framework for this inverse problem
involves double characteristics, which were studied in the paper of
Guillemin and Uhlmann \cite{Guillemin-Uhlmann:Oscillatory}. However,
here these need to be analyzed in the context of scattering
pseudodifferential operators, and the analysis must be {\em global} on
the manifold cut out by the artificial boundary.

Of course, we would like to determine all three of the remaining
parameters ideally. One may set up a system by adding a third row and
using different premultipliers, as was done in
\cite{Stefanov-Uhlmann-Vasy:Rigidity-Normal} to treat the normal
gauge, but one will certainly still have the double characteristic
phenomena at the minimum.

\medskip\medskip

\noindent
{\small The authors thank Joey Zou for pointing out an incorrect
  statement in Lemma~\ref{lemma:material-derivs} in an earlier version
  of the paper. The authors are very grateful to the Hong Kong University of
  Science and Technology (HKUST) and the University of Jyv\"askyl\"a
  for their hospitality during stays which led to this
  work. The authors also gratefully acknowledge partial support from the
  National Science Foundation, and G.U.\ is grateful for partial
  support from a Si Yuan
  professorship from the Institute for Advanced Study of HKUST, while
  M.V.d.H.\ and A.V.\ thank the Simons Foundation for partial support.}

\section{Proof of Theorem~\ref{thm:sh}}\label{sec:sh}
At the beginning of this section we recall the results of Stefanov,
Uhlmann and Vasy \cite{Stefanov-Uhlmann-Vasy:Rigidity-Normal}. The
simplest result to formulate is the local boundary rigidity result in
Riemannian geometry.

\begin{thm}(\cite[Theorem~1.1]{Stefanov-Uhlmann-Vasy:Rigidity-Normal})\label{thm:local-impr}
Suppose that $(M,g)$ is an $n$-dimensional Riemannian manifold with boundary, $n\geq
3$, and assume that $\bo$ is strictly convex (in the second
fundamental form
sense) with respect to each of the two metrics $g$ and $\hat g$ at some $p\in\bo$. If $d_g|_{U\times U}=d_{\hat g}|_{U\times U}$, for some neighborhood  $U$ of $p$  in $\pa M$, then  there is a neighborhood $O$ of $p$ in $M$  and a diffeomorphism $\psi:O\to \psi(O)$ fixing $\pa M\cap O$ pointwise  such
that $g|_O=\psi^*\hat g|_O$.

Furthermore, if the boundary is compact and everywhere strictly convex
with respect to each of the two metrics $g$ and $\hat g$ and
$d_g|_{\bo\times\bo}=d_{\hat g}|_{\bo\times\bo}$, then  there is a
neighborhood $O$ of $\bo$ in $M$ and a diffeomorphism $\psi:O\to \psi(O)$ fixing $\pa M\cap O$ pointwise  such
that $g|_O=\psi^*\hat g|_O$.
\end{thm}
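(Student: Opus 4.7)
\emph{The plan} is to follow the Uhlmann--Vasy scheme for the geodesic X-ray transform, adapted to symmetric $2$-tensors and coupled with the Stefanov--Uhlmann pseudolinearization formula. First I would exploit the strict convexity of $\pa M$ at $p$ to choose a local defining function $x$, vanishing at $\pa M$ near $p$ with $dx$ outward conormal, whose nearby super-level sets $\{x > -c\}$ remain strictly convex for $c>0$ small. Working in boundary normal coordinates and using the standard jet-determination at a strictly convex boundary point (Lassas--Sharafutdinov--Uhlmann), I may assume, after pulling $\hat g$ back by a boundary-fixing diffeomorphism, that $g$ and $\hat g$ agree to infinite order on $\pa M\cap U$; set $h = g - \hat g$, a symmetric $2$-tensor vanishing to infinite order at $\pa M$.

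Next I would invoke the Stefanov--Uhlmann pseudolinearization identity applied to the equality $d_g = d_{\hat g}$ on $U\times U$ to write $d_g - d_{\hat g}$ as an integral over a one-parameter family of geodesics interpolating between $g$ and $\hat g$ of a weighted expression linear in $h$ and quadratic in the momentum. This converts the nonlinear equality of boundary distances into the vanishing of a weighted X-ray-type transform $Lh$ of the symmetric $2$-tensor $h$ along these interpolating geodesics, reducing local rigidity to solenoidal injectivity of $L$ in a small region near $p$.

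I would then restrict to the lens-shaped domain $\Omega_c = \{x \geq -c\}\cap O$ and, following Uhlmann--Vasy, form a weighted normal operator $N_F = L^* e^{-2F/(x+c)} L$, localized so that the artificial boundary $\{x=-c\}$ plays the analytic role of the boundary while the actual $\pa M$ only constrains supports. The central analytic claim is that $N_F$ lies in Melrose's scattering pseudodifferential algebra on $\Omega_c$ and is elliptic in both the standard and the boundary (scattering) principal symbol senses on the solenoidal component of symmetric $2$-tensors: standard symbol ellipticity is a fiber-integral computation over covectors annihilating a given $\xi$, and boundary symbol ellipticity at $\{x=-c\}$ uses the strict convexity of the level sets of $x$ together with the exponential weight to force decay of the relevant kernel.

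\emph{The main obstacle} is gauge fixing. The kernel of $L$ on symmetric $2$-tensors contains all symmetric differentials $\dsymm v$ of one-forms $v$ vanishing on $\pa M$, so one can only hope to recover the solenoidal part of $h$. I would impose a solenoidal gauge, verify that the restriction of $N_F$ to solenoidal tensors remains elliptic in the scattering calculus, and deduce Fredholm invertibility on weighted scattering Sobolev spaces; smallness of $c$ eliminates any finite-dimensional kernel by comparison with a constant-coefficient model in which injectivity is clear, yielding $h=\dsymm v$ modulo gauge. The sought local diffeomorphism $\psi$ with $\psi^*\hat g = g$ and $\psi|_{\pa M\cap O}=\id$ is then reconstructed by integrating the flow of the dual vector field of $v$. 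For the global assertion, I would foliate a collar of the compact strictly convex $\pa M$ by strictly convex hypersurfaces and run the local argument layer by layer, patching the resulting diffeomorphisms via a standard openness--closedness continuation argument on the set where the recovered $\psi$ extends.
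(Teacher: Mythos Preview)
This theorem is not proved in the present paper at all: it is \emph{recalled} from \cite{Stefanov-Uhlmann-Vasy:Rigidity-Normal} (the citation is in the theorem header itself), and the paper uses it as a black box in the proof of Theorem~\ref{thm:sh}. So there is no ``paper's own proof'' to compare against here; your task was effectively to sketch the argument of the cited reference.

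That said, your outline is broadly the strategy of \cite{Stefanov-Uhlmann-Vasy:Rigidity-Normal}: pseudolinearization via the Stefanov--Uhlmann identity, followed by the Uhlmann--Vasy artificial-boundary/scattering-calculus machinery applied to the resulting tensorial X-ray transform, with layer-stripping for the global collar statement. One substantive deviation: you propose working in the \emph{solenoidal} gauge and proving ellipticity of the normal operator on the solenoidal subspace. The paper \cite{Stefanov-Uhlmann-Vasy:Rigidity-Normal} is titled ``\ldots in the normal gauge'' precisely because it does \emph{not} do this; the solenoidal gauge does not localize well under the artificial-boundary construction (the solenoidal projector is nonlocal), and instead they put the difference tensor in the normal gauge adapted to the foliation, where the remaining tangential--tangential block can be handled by the scalar-type ellipticity argument together with a separate treatment of the normal components. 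Your sketch glosses over exactly the point where the serious work lies, and the specific gauge you name would not go through as stated.
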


The same paper also proves a global consequence of the local results,
assuming that $M$ is connected with non-trivial boundary. This global
statement requires a globally defined function $\foliation$ with level
sets which are strictly concave from the superlevel sets and which is
$\geq 0$ at a subset of $\pa M$; such
functions necessarily exist near the boundary but not necessarily
globally. (One can take for instance the negative of a boundary
defining function as a local example near the boundary, though this
does not localize within $\pa M$. See \cite{Uhlmann-Vasy:X-ray} for
more examples.) One also has global existence of such a function under appropriate
curvature conditions; see \cite{Stefanov-Uhlmann-Vasy:Rigidity-Normal}
for more details and references. (As an example, for domains in non-positively
curved simply connected manifolds, the distance to a point outside the
domain satisfies the concavity requirements.) This theorem uses the {\em lens
  relation}, which in addition to the distance between boundary points
keeps track of the directions at these points of geodesics connecting
them. For simple manifolds (strictly convex boundary and the geodesic
exponential map around each point is a diffeomorphism) the knowledge
of the boundary distance function $d_g|_{\bo\times\bo}$ and of the
lens relation is equivalent, see \cite{Michel}.

\begin{thm}(\cite[Theorem~1.3]{Stefanov-Uhlmann-Vasy:Rigidity-Normal})\label{thm:global}
Suppose that $(M,g)$ is a compact $n$-dimensional Riemannian manifold, $n\geq 3$, with strictly convex boundary, and $\foliation$ is a smooth function with non-vanishing
differential whose level sets are strictly concave from the superlevel sets, and $\{\foliation\geq 0\}\cap M\subset\pa M$.

Suppose also that $\hat g$ is a Riemannian metric on $M$ 
and suppose that
the lens relations of $g$ and $\hat g$ are the same.

Then there exists a diffeomorphism $\psi:M\to M$ fixing $\pa M$ such
that $g=\psi^*\hat g$.
\end{thm}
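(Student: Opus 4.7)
The plan is a layer-stripping induction along the foliation by level sets of $\foliation$, using Theorem~\ref{thm:local-impr} repeatedly with each level set playing the role of an artificial strictly convex boundary. The hypothesis that the levels $\{\foliation=c\}$ are strictly concave from the super-level sets is equivalent to their being strictly convex as boundaries of the sub-level regions $\{\foliation\leq c\}$, which is the form required by Theorem~\ref{thm:local-impr}. The hypothesis $d\foliation\neq 0$ prevents interior critical points, and the condition $\{\foliation\geq 0\}\cap M\subset\pa M$ ensures that the induction can start at $\pa M$. The lens relation (rather than merely the boundary distance function) is used because the interior slabs need not be simple.

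For the base case, the boundary-compact form of Theorem~\ref{thm:local-impr} applied to the strictly convex compact $\pa M$ yields a diffeomorphism $\psi_0$ of $M$ fixing $\pa M$ with $g=\psi_0^*\hat g$ on some neighborhood $O$ of $\pa M$. By compactness of $\pa M$ and non-vanishing $d\foliation$ there is $c_0<0$ with $\{\foliation\geq c_0\}\subset O$. Since pullback preserves the lens relation, we may replace $\hat g$ by $\psi_0^*\hat g$ and assume $g=\hat g$ on $\{\foliation\geq c_0\}$. Define
\[
c^*=\inf\bigl\{c\in[c_{\min},c_0]:g=\psi^*\hat g\text{ on }\{\foliation\geq c\}\text{ for some diffeo }\psi\text{ of }M\text{ fixing }\pa M\bigr\},
\]
where $c_{\min}=\min_M\foliation$. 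The claim is that $c^*=c_{\min}$.

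Suppose for contradiction $c^*>c_{\min}$. After pulling back by the corresponding $\psi$ we may assume $g=\hat g$ on $\{\foliation>c^*\}$. Since the geodesic flows agree there and strict concavity of the levels from above rules out tangential trapping, every geodesic entering $\{\foliation\leq c^*\}$ extends uniquely and identically for the two metrics through $\{\foliation>c^*\}$ out to $\pa M$. Equality of the lens relations on $\pa M$ therefore transfers, via tracking entry and exit on $\{\foliation=c^*\}$, to equality of the lens relations of $g$ and $\hat g$ on $\{\foliation\leq c^*\}$ with respect to the artificial boundary $\{\foliation=c^*\}$, and locally to equality of boundary distance functions. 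At every $p\in\{\foliation=c^*\}$ this artificial boundary is strictly convex for both metrics (the second fundamental form is determined by one-sided metric data, which agree on the $\{\foliation>c^*\}$ side). Theorem~\ref{thm:local-impr} applied in the slab $\{c^*-\ep\leq\foliation\leq c^*\}$ near $p$ then yields a diffeomorphism $\psi_p$ fixing $\{\foliation=c^*\}$ pointwise with $g=\psi_p^*\hat g$. By compactness of the level set, finitely many $\psi_p$ cover a uniform slab; patching them in the boundary-normal (semigeodesic) gauge along $\{\foliation=c^*\}$ and extending as the identity into $\{\foliation>c^*\}$ yields a boundary-fixing diffeomorphism $\psi$ of $M$ with $g=\psi^*\hat g$ on $\{\foliation\geq c^*-\ep\}$, contradicting the definition of $c^*$. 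Hence $c^*=c_{\min}$, and since $\{\foliation\geq c_{\min}\}=M$, the resulting diffeomorphism works on all of $M$.

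The most delicate step is the gluing in the inductive argument: different local diffeomorphisms supplied by Theorem~\ref{thm:local-impr} on overlapping neighborhoods of $\{\foliation=c^*\}$ must be compatible, and the patched object must join smoothly with the identity on $\{\foliation>c^*\}$. This is handled by working in the canonical boundary-normal (semigeodesic) gauge relative to $\{\foliation=c^*\}$, in which both metrics take a standard form and the diffeomorphism of Theorem~\ref{thm:local-impr} is uniquely specified, so that overlaps agree automatically and the extension across the level as the identity is smooth. A secondary technicality is the attainment of the infimum defining $c^*$, i.e., closedness of the set of admissible $c$; this requires a compactness argument for diffeomorphisms in the boundary-normal gauge, using bounded geometry inherited from the compactness of $M$.
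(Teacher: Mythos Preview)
This theorem is not proved in the present paper: it is quoted verbatim as \cite[Theorem~1.3]{Stefanov-Uhlmann-Vasy:Rigidity-Normal} and used as a black box in Section~\ref{sec:sh}. There is therefore no ``paper's own proof'' to compare against here.

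That said, your sketch does follow the layer-stripping philosophy of the cited reference: iterate the local result Theorem~\ref{thm:local-impr} along the convex foliation via an open--closed argument on the level value $c$, transferring lens data through the already-controlled region to the artificial boundary $\{\foliation=c\}$. You have also correctly flagged the two genuinely nontrivial points, namely the gluing of the local diffeomorphisms (handled in \cite{Stefanov-Uhlmann-Vasy:Rigidity-Normal} via the normal gauge) and the closedness of the set of admissible $c$. Be aware, however, that in the actual proof these are not minor technicalities dispatched in a sentence: the normal-gauge argument and the uniformity needed for closedness occupy a substantial portion of \cite{Stefanov-Uhlmann-Vasy:Rigidity-Normal}, and your statement that ``overlaps agree automatically'' in semigeodesic coordinates, while morally right, requires the full machinery developed there (in particular the a priori regularity/stability estimates for the local step that make the diffeomorphisms depend continuously on the data). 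As a high-level outline your proposal is accurate; as a self-contained proof it would need those ingredients filled in by reference to \cite{Stefanov-Uhlmann-Vasy:Rigidity-Normal}.
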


We start the proof of Theorem~\ref{thm:sh} by discussing some
  consequences of its integrability
hypothesis.

As we already mentioned, in general $\Ker\omega$, thus
in this case $\Ker df$ is well-defined, and so is its
$g$-orthocomplement, which is the same as the \newline $g_0$-orthocomplement,
since if a vector $W$ is $g_0$ orthogonal to an element $V$ of $\Ker
df$, then $df\otimes df(V,W)=df (V)\,df(W)=0$ shows the
$g$-orthogonality, and conversely. Moreover, taking $y_3=f$, one can introduce local
coordinates in which $\pa_{y_1},\pa_{y_2}$ are orthogonal to
$\pa_{y_3}$: one does this by defining $y_j$ on a level set of $f$,
and then extending them to be constant along integral curves of
$\nabla^g f$. Indeed, in this case $\pa_{y_1},\pa_{y_2}$ are tangent
to the level sets of $f$, for $\pa_{y_j}y_3=0$, $j=1,2$, while
$\pa_{y_3}$ is a multiple of $\nabla^g f$, which is orthogonal to the
$f$ level sets, hence to the $\pa_{y_j}$, $j=1,2$. Correspondingly, in
these coordinates, the metric takes the form
$$
g=\sum_{i,j=1}^2 a_{ij} \,dy_i\otimes dy_j+a_{33} \,dy_3\otimes dy_3.
$$
Notice that by the above remark, one has the same result if
$\nabla^{g_0} f$-integral curves are used, since they are also
orthogonal to the level sets of $f$, thus are simply
reparameterizations of the $\nabla^g f$-integral curves. Furthermore,
one can take {\em any} hypersurface transversal to $\nabla^{g_0} f$ to
define $y_1$ and $y_2$ originally. Thus, if $\nabla^{g_0} f$ is not
tangent to the boundary, i.e., $\omega$ is not $G_0$-orthogonal to $N^*\pa M$, {\em as we have assumed}, one can use the boundary for this
purpose.

Now suppose that two metrics $g$ and $\tilde g$ of this form are the same up to
a diffeomorphism $\Phi$ fixing the boundary, i.e.
$$
\tilde\alpha g_0+\tilde\gamma\,d\tilde f\otimes\,d\tilde
f=\Phi^*(\alpha g_0+\gamma\,df\otimes\,df).
$$
Since $\Ker df$ is determined by $g$, and $\Ker d\tilde f$ is
determined by $\tilde g$, $\Phi$ preserving the metrics implies that
the differential of $\Phi$ then will take $\Ker df$ and its
$g$, thus $g_0$-orthocomplement to $\Ker d\tilde f$ and its $\tilde
g$, thus $\tilde g_0$-orthocomplement. Using coordinates $y_j$ and
$\tilde y_j$ as above this means that $D\Phi$ is block-diagonal, with
the $(12)$ and $(3)$ blocks being non-trivial. This says that
$\pa_j\Phi_3=0$, $j=1,2$, and $\pa_3 \Phi_j=0$, $j=1,2$. Thus,
$\Phi_1$ and $\Phi_2$ are independent of $y_3$, while $\Phi_3$ is
independent of $y_1$ and $y_2$. So if one can make the argument that
$(\Phi_1,\Phi_2)$ is the identity at some point each $y_3$-curve then it
is so globally; moreover $\Phi_3$ simply relabels the level sets,
i.e., $\tilde y_3=\Phi_3(y_3)$. But
one can achieve this by choosing $y_1$ and $y_2$ on the boundary
(locally), {\em using that $\nabla^{g_0}f,\nabla^{g_0}\tilde f$ are
transversal to the boundary by our assumption}, and then choosing $\tilde y_1$ and
$\tilde y_2$ to be the same as $y_1,y_2$ there -- then at the
boundary, the (12) block of $D\Phi$ is the identity matrix. Moreover,
{\em as $\nabla f$ is not orthogonal to $\pa M$, i.e., $\omega$ is not
  conormal to $\pa M$, by our assumption}, the labelling of the
level sets of $f$ is determined by their value at the boundary (since
they intersect the boundary, and they do so non-degenerately), and
the same for $\tilde f$. Thus, in this case the diffeomorphism is the
identity, and thus $g$ is determined from the boundary distance
function (locally). Since $g$ in turn determines $\alpha,\gamma=\beta-\alpha$, this
proves Theorem~\ref{thm:sh}.

\section{Proof of Theorem~\ref{thm:p-sv} and Theorem~\ref{thm:double-chars}}

\subsection{The pseudolinearization formula and its basic properties}

To proceed, consider the Stefanov-Uhlmann pseudolinearization formula
which, as we recalled already, is valid for all
Hamiltonian flows and goes back to \cite{Stefanov-Uhlmann:Rigidity};
recall that in the isotropic setting one uses the momentum,
$\pa_\xi$, component of the Hamilton vector field to recover the
unknown wave speed. This in turn involves the position, $x$, derivative of the
effective Hamiltonian.
Concretely, see \cite[Section~7.2.2]{Stefanov-Uhlmann-Vasy:Rigidity-Normal}, the $\xi$-component of this formula for two Hamilton
vector fields $H_p$ and $H_{\tilde p}$ corresponding to two effective
Hamiltonians $p$ and $\tilde p$, denoting their flows by
$(X,\Xi)$ (with corresponding integral curve $\gamma$, exit time $\tau=\tau(x,\xi)$), resp.\ $(\tilde
X,\tilde\Xi)$, with $f=p-\tilde p$, takes the form
\begin{equation}\label{eq:SU-form}
J_i f(\gamma) \! = \! \int
(A^j_i(X(t),\Xi(t))\pa_{x^j}f(X(t),\Xi(t))+B_{ij}(X(t),\Xi(t))\pa_{\xi_j} f(X(t),\Xi(t)))\,dt=0
\end{equation}
with
\begin{equation*}\begin{aligned}
A^j_i(x,\xi)&=-\frac{\pa\tilde\Xi_i}{\pa\xi_j}(\tau(x,\xi),(x,\xi)),\\
B_{ij}(x,\xi)&=\frac{\pa\tilde\Xi_i}{\pa x^j}(\tau(x,\xi),(x,\xi)).
\end{aligned}\end{equation*}
Thus, at the boundary,
$$
A^j_i(x,\xi)=-\delta^j_i,\ B_{ij}(x,\xi)=0.
$$

Now suppose there is a function $P=P(x,\xi,\nu_1,\ldots,\nu_k)$
depending on parameters $\nu_j$, which are here the material
parameters $a_{ij}$, and corresponding to either wave speed $G_{qP/qSV}$, and suppose that
$$
p(x,\xi)=P(x,\xi,\nu_1,\ldots,\nu_k),\ \tilde
p(x,\xi)=P(x,\xi,\tilde\nu_1,\ldots,\tilde\nu_k),
$$
for two media with particular parameters $\nu_1,\ldots,\nu_k$, resp.\ $\tilde\nu_1,\ldots,\tilde\nu_k$.
Then
$$
p(x,\xi)-\tilde
p(x,\xi)=\sum_{j=1}^k(\nu_j(x)-\tilde\nu_j(x))E^j(x,\xi),
$$
with
$$
E^j(x,\xi)=\int_0^1\frac{\pa P}{\pa\nu_j}(s\nu_1(x)+(1-s)\tilde\nu_1(x),\ldots, s\nu_k(x)+(1-s)\tilde\nu_k(x),x,\xi)\,ds.
$$
Now, if these two media have the same
lens relations (and thus locally if they simply have the same travel times),
the Stefanov-Uhlmann identity gives
with $f_l(x)=\nu_l(x)-\tilde \nu_l(x)$, and now $f=(f_1,\ldots,f_k)$,
\begin{equation}\label{eq:SU-mod-form}
J_i f(\gamma)=\sum_{l=1}^k\int
(\hat A^{jl}_i(X(t),\Xi(t))\pa_{x^j}f_l(X(t))+\hat B^l_{i}(X(t),\Xi(t))f_l(X(t))\,dt=0
\end{equation}
with
\begin{equation*}\begin{aligned}
\hat A^{jl}_i(x,\xi)&=-\frac{\pa\tilde\Xi_i}{\pa\xi_j}(\tau(x,\xi),(x,\xi))E^l(x,\xi),\\
\hat B^l_{i}(x,\xi)&=-\frac{\pa\tilde\Xi_i}{\pa\xi_j}(\tau(x,\xi),(x,\xi)) \pa_{x^j}E^l(x,\xi)+\frac{\pa\tilde\Xi_i}{\pa x^j}(\tau(x,\xi),(x,\xi)) \pa_{\xi_j} E^l(x,\xi).
\end{aligned}\end{equation*}
Thus, at the boundary, where the $\nu_l$ and $\tilde\nu_l$ are equal,
\begin{equation*}\begin{aligned}
\hat A^{jl}_i(x,\xi)&=-\delta_i^j \frac{\pa P}{\pa\nu_l}(\nu_1(x),\ldots,\nu_k(x),x,\xi),\\
\hat B^l_{i}(x,\xi)&=-\delta_i^j \pa_{x^j}\frac{\pa P}{\pa\nu_l}(\nu_1(x),\ldots,\nu_k(x),x,\xi).
\end{aligned}\end{equation*}
Notice that now this is a {\em linear} transform in the $f_l$.

A convex foliation, by level sets of a function $\foliation$, often
plays a role in this paper. The level sets of a function $\foliation$ on $M$
are {\em concave}, or {\em concave from the superlevel sets}, or {\em
  convex from the sublevel sets} for a Hamiltonian $p$ if along
integral curves $\gamma(t)=(X(t),\Xi(t)))$ of $H_p$,
$\frac{d}{dt}(\foliation\circ X)(t)=0$ implies
$\frac{d^2}{dt^2}(\foliation\circ X)(t)>0$. This corresponds to the
assumptions in Section~3.2 of \cite{Uhlmann-Vasy:X-ray} above
Equation~(3.1). Convexity from the superlevel sets is defined
similarly, with the strict inequality reversed.

Instead of using the cotangent space for parameterizing the
bicharacteristics, one may want to use the tangent space instead. For
this one considers the Hamilton vector field map of the Hamiltonian
function $p$: the tangent vector
to a projected bicharacteristic $\gamma(t)=X(t)$ corresponding to the
bicharacteristic $(X(t),\Xi(t))$ is $\dot\gamma(t)=H_{X(t)}(\Xi(t))$,
where $H_x$ is the push-forward of the Hamilton vector field to the
base
$$
H_x(\xi)=\sum_j\frac{\pa p}{\pa\xi_j}(x,\xi)\pa_{x_j},
$$
where the notation indicates that for each base point $x$ we consider
it as a map
$$
\xi\mapsto H_x(\xi).
$$
When $p(x,.)$ is a quadratic polynomial (i.e. $p$ is a quadratic
polynomial in $\xi$), thus for Riemannian geometry and the qSH
transversely isotropic waves, this is a linear map, but in general it
is nonlinear. In order to parameterize the bicharacteristics, this
should be a map with a smooth inverse, at least locally along the
bicharacteristics we wish to use. This holds if $DH_x$ is
invertible. Explicitly, this differential is the Hessian matrix with
$ij$ entry $\frac{\pa^2 p}{\pa\xi_i\pa\xi_j}$. If $p(x,.)$ is a
positive definite quadratic polynomial, such as in Riemannian geometry
and qSH waves, then the Hessian matrix is positive definite, thus
invertible. Positive definiteness of the Hessian corresponds to strict
convexity of the level sets of $p$ from the side of the sublevel sets. In general, for interesting
examples of $p$ arising from qSV waves in transversely isotropic
materials, such as for the Greenhorn shale, see
e.g.\ \cite[Figure~2]{Schoenberg-deHoop:Approximate}, the strict
convexity may fail.

Since the general method of \cite{Uhlmann-Vasy:X-ray} uses curves that
are almost tangential to the level sets of the convex foliation, and
in many examples (supported by geodynamical considerations) the
tangent space of the level sets of the convex foliation may lie close
to the orthogonal plane to the isotropy axis, we start by remarking
that under easy to formulate conditions the qSV (and qP) level sets
are strictly convex there. (This is guaranteed if $E^2$ is not
exceedingly negative, while in Earth materials, typically, $E^2 \ge 0$
\cite{Thomsen}.) In the following lemma, the tilded coordinates
correspond to the transverse isotropy with the third coordinate
corresponding to its axis, as in the introduction; note that the
Hamilton vector field being invariantly defined, it makes no
difference in what coordinates we consider the map $H_x$. (We also
recall the standing assumption $\max\{a_{55},a_{66}\} <
\min\{a_{11},a_{33}\}$ here.)

\begin{lemma}\label{lemma:non-deg-diff-xi3}
Suppose that either $p=p_+=G_{qP}$, or instead $p=p_-=G_{qSV}$ and
$$
a_{33} (a_{11}-a_{55})>(a_{13}+a_{55})^2.
$$
Then the map $\tilde \xi\mapsto H_{\tilde x}(\tilde\xi)=\sum_j\frac{\pa p}{\pa\tilde\xi_j}\pa_{\tilde
  x_j}$ has an invertible differential at $\tilde\xi_3=0$, and indeed
the level sets of $p$ are strictly convex (from the sublevel sets) nearby.
\end{lemma}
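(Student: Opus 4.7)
The plan is to reduce both assertions --- invertibility of the differential of the Hamilton map $\tilde\xi\mapsto H_{\tilde x}(\tilde\xi)$ and strict convexity of the level sets of $p$ from the sublevel sets --- to positive definiteness of the full Hessian $\partial^2 p/\partial\tilde\xi_i\partial\tilde\xi_j$ at points $(\tilde\xi',0)$ with $\tilde\xi'\neq 0$. In these coordinates $DH_{\tilde x}(\tilde\xi)$ \emph{is} this Hessian, so invertibility is immediate once positivity is established. Moreover $p$ is positively homogeneous of degree two in $\tilde\xi$ and, by the standing assumption $\max\{a_{55},a_{66}\}<\min\{a_{11},a_{33}\}$, strictly positive on $\{\tilde\xi\neq 0\}$ in both branches; for such a function, positive definiteness of the Hessian at every nonzero point is equivalent to strict convexity of the sublevel sets, hence to the claimed strict convexity of the level sets.

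The first ingredient is smoothness of $p_\pm$ at $\tilde\xi_3=0$. The standing inequality $a_{11}>a_{55}$ gives $Q:=(a_{11}-a_{55})|\tilde\xi'|^2+(a_{33}-a_{55})\tilde\xi_3^2>0$ on a neighborhood of $\{\tilde\xi_3=0,\,\tilde\xi'\neq 0\}$. The discriminant $D=Q^2-4E^2|\tilde\xi'|^2\tilde\xi_3^2$ reduces to $Q^2>0$ at $\tilde\xi_3=0$, so $\sqrt{D}$, and with it both branches $p_\pm$, is smooth there.

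Rotational symmetry in $\tilde\xi'$ lets me write $p_\pm=F_\pm(r,s)$ with $r=|\tilde\xi'|^2$, $s=\tilde\xi_3^2$. A direct chain-rule calculation then shows that at $\tilde\xi_3=0$ the Hessian in $\tilde\xi$ is block diagonal in the $\tilde\xi'$--$\tilde\xi_3$ splitting: the tangential block is $2\partial_rF\cdot I+4\partial_r^2F\cdot\tilde\xi'\otimes\tilde\xi'$ and the axial block is the scalar $2\partial_sF$, all evaluated at $s=0$. The cross terms $\partial_{\tilde\xi_i}\partial_{\tilde\xi_3}p$ vanish because they come with a factor of $\tilde\xi_3$. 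Positive definiteness of the Hessian is thus reduced to showing $\partial_rF_\pm(r,0)>0$ and $\partial_sF_\pm(r,0)>0$.

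For the last step I would expand $\sqrt{D}=Q-2E^2rs/Q+O(s^2)$ and collect terms to order $s$, obtaining
\[
F_+(r,s)=2a_{11}r+2\Bigl[a_{55}+\frac{(a_{13}+a_{55})^2}{a_{11}-a_{55}}\Bigr]s+O(s^2),
\]
\[
F_-(r,s)=2a_{55}r+\frac{2\bigl[a_{33}(a_{11}-a_{55})-(a_{13}+a_{55})^2\bigr]}{a_{11}-a_{55}}\,s+O(s^2).
\]
The radial coefficients $2a_{11}$ and $2a_{55}$ are positive, $\partial_sF_+(r,0)$ is manifestly positive, and $\partial_sF_-(r,0)>0$ is precisely the stated hypothesis. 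This yields Hessian positivity, hence invertibility and strict convexity of the level sets, and continuity propagates both to a neighborhood of $\tilde\xi_3=0$. The only delicate point is this axial coefficient: both the linear-in-$s$ piece $(a_{33}-a_{55})s$ inside $Q$ and the correction $-2E^2rs/Q$ from the square root contribute at first order in $s$, and they have to be combined carefully to produce the clean criterion in terms of $a_{33}(a_{11}-a_{55})-(a_{13}+a_{55})^2$.
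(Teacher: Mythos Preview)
Your proof is correct and follows essentially the same route as the paper: both reduce to positive definiteness of the Hessian of $p_\pm$ at $\tilde\xi_3=0$, observe it is block diagonal there, and compute the tangential and axial entries to arrive at the same expressions $2a_{11},\,2a_{55}$ and $2a_{55}+\tfrac{2(a_{13}+a_{55})^2}{a_{11}-a_{55}},\,2a_{33}-\tfrac{2(a_{13}+a_{55})^2}{a_{11}-a_{55}}$. Your use of the substitution $r=|\tilde\xi'|^2$, $s=\tilde\xi_3^2$ and the first-order expansion of $\sqrt{D}$ in $s$ is just a repackaging of the paper's direct differentiation; the one place you are slightly terse is in passing over the $4\partial_r^2F\cdot\tilde\xi'\otimes\tilde\xi'$ term, but this vanishes since $F_\pm(r,0)$ is linear in $r$ (equivalently by degree-one homogeneity of $F$), which your subsequent formulas confirm.
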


\begin{rem}
This lemma also plays an important role below in studying the precise
degeneracy in determining various material parameters from various waves.
\end{rem}

\begin{rem}
Notice that if $E^2\geq 0$, the right-hand side is
$\leq(a_{11}-a_{55})(a_{33}-a_{55})$, so the inequality in the
statement of the lemma is automatically true.
\end{rem}

\begin{proof}
We just need to compute the Hessian matrix $\frac{1}{2}\frac{\pa^2
  p_\pm}{\pa\tilde\xi_i\pa\tilde\xi_j}$ and show that it is positive definite
when $\tilde\xi_3=0$. But this Hessian is diagonal, with a
multiplicity 2 entry for the first 2 components. At $\tilde\xi_3=0$ the
multiplicity two entry is particularly easy to evaluate as one may
simply set $\tilde\xi_3=0$ prior to differentiation to obtain
$$
(a_{11}+a_{55})\pm (a_{11}-a_{55}),
$$
which are positive.
So it remains to evaluate the multiplicity one entry, namely $\frac{1}{2}\frac{\pa^2
  p_\pm}{\pa\tilde\xi_3^2}$. Again, this simplifies as after the first
differentiation we may set all terms with a $\tilde\xi_3^2$ factor to
$0$, i.e., we just need to differentiate
$$
(a_{33}+a_{55})\tilde\xi_3\pm\frac{(a_{33}-a_{55})(a_{11}-a_{55})|\tilde\xi'|^2-2E^2|\tilde\xi'|^2}{(a_{11}-a_{55})|\tilde\xi'|^2}\tilde\xi_3,
$$
which is
$$
(a_{33}+a_{55})\pm\frac{(a_{33}-a_{55})(a_{11}-a_{55})-2E^2}{a_{11}-a_{55}}.
$$
As $E^2=(a_{11}-a_{55})(a_{33}-a_{55})-(a_{13}+a_{55})^2$, this
simplifies to
$$
(a_{33}+a_{55})\pm\frac{2(a_{13}+a_{55})^2-(a_{11}-a_{55})(a_{33}-a_{55})}{a_{11}-a_{55}},
$$
which is
$$
2a_{55}+\frac{2(a_{13}+a_{55})^2}{a_{11}-a_{55}},
$$
thus always positive, for the $+$ sign,
and is
$$
2a_{33}-\frac{2(a_{13}+a_{55})^2}{a_{11}-a_{55}},
$$
for the $-$
sign, which is positive if
$$
a_{33} (a_{11}-a_{55})>(a_{13}+a_{55})^2.
$$
This completes the proof.
\end{proof}

In general, as already mentioned, we do not have strict convexity of
the level sets, and as the Hessian changes signature, $H_x$ ceases to
have an invertible differential along some submanifolds. Globally,
this results in $H_x$ not being injective, giving rise to phenomena
such as triplication (higher multiplicities cannot occur in the case
of transverse isotropy) where a given (normalized) tangent vector is
the image of multiple covectors. However, {\em for qP waves strict
  convexity (from sublevel sets) always holds} \cite[p.168]{Chapman}, \cite{Payton}, and
in general this phenomenon motivates the following definition:

\begin{Def}\label{def:non-deg-transverse}
A transversely isotropic material is non-degenerate relative to a
convex foliation (concave from the superlevel sets for $G_{qSV}$) if for each point $x$ and each vector $v$ tangent to
the convex foliation at the point $x$ there is a covector $\xi$ in the
cotangent space over $x$ such that $H_{x}(\xi)=v$ and the map $H_{x}$
has invertible differential at $\xi$, with $H_x$ arising from
$G_{qSV}$. A transversely isotropic material is non-degenerate
provided the statement above holds for all $v$ (and not just $v$
tangent to a particular convex foliation).
\end{Def}

Lemma~\ref{lemma:non-deg-diff-xi3}, under the assumed condition, thus
shows that if the transverse isotropy orthogonal planes are close to
the tangent spaces to a convex foliation, then the material is
non-degenerate relative to the convex foliation.

In a non-degenerate, relative to a convex foliation, material, one may
always consider, at least locally, the bicharacteristics to be
parameterized by tangent vectors. This is useful both in order to
localize to almost tangent to the convex foliation vectors and also to
analyze the transform: stationary phase computations, discussed below,
use the natural pairing between covectors at which principal symbols
are evaluated and tangent vectors to the projected bicharacteristics
being used. This approach also has the advantage of connecting better
to the notation of \cite{Uhlmann-Vasy:X-ray,
  Stefanov-Uhlmann-Vasy:Boundary, Stefanov-Uhlmann-Vasy:Rigidity-Normal}.

Thus, from now on, {\em we assume that the material is non-degenerate
relative to the fixed convex foliation.} We then define a transform
$\tilde L$ from the cotangent space, which is a transform of the
form
$$
\tilde L=\sum_i \Psi_i L_i \Psi_i^{-1}\tilde\phi_i,
$$
$\tilde\phi_i$ a cutoff supported in a
region on a neighborhood of which $H_x$ is smoothly invertible, and $L_i$ is a
transform, discussed below, from the tangent space, where the local
identification $\Psi_i$ is given by pull-back by the Hamilton vector field map
$H_x$, and $\Psi_i^{-1}$ is the pull-back by the local inverse
$H_x^{-1}$. Concretely, we cover a neighborhood of the tangent space
of the convex foliation with open sets $O_i$ on each of which $H_x^{-1}$
exists as a smooth map with image $\tilde O_i$ in the cotangent space, and take a corresponding partition of unity $\phi_i$ (so
$\sum_i\phi_i=1$ on a smaller neighborhood of the tangent space of the
convex foliation), and let $\tilde\phi_i$ be defined as $H_x^*\phi_i$
on $\tilde O_i$ (with support in a compact subset of $\tilde O_i$) and
$0$ outside.

{\em In order to avoid overburdening the notation, since all arguments
below are local we drop the index $i$, and simply write $L$, and
understand that $H_x^{-1}$ refers to the localized inverse for
$H_x:\tilde O_i\to O_i$.}

Following \cite{Uhlmann-Vasy:X-ray},
we use coordinates $(x_1,x_2,x_3)=(y,x)=z$ in which $x=x_3$ are the level sets
are the convex foliation, and $x_3=0$ is the artificial boundary,
We write tangent vectors as $\lambda\,\pa_x+\omega\,\pa_y$, and the
projected bicharacteristic corresponding to such a tangent vector at
$z$ as $\gamma_{z,\lambda,\omega}=\gamma_{x,y,\lambda,\omega}$.
One then considers an operator of the form $LJ$, where $L$ is a
slightly modified version of $J^*$, and where $L$ cuts off at the
artificial boundary, see \cite{Uhlmann-Vasy:X-ray}:
$$
(Lv)(z)=x^{-2}\int\chi(\lambda/x)v(\gamma_{z,\lambda,\omega})\, d\lambda\,d\omega,
$$
cf.\ \cite{Stefanov-Uhlmann-Vasy:Rigidity-Normal}, the displayed
equation below (3.1) (this differs from \cite{Uhlmann-Vasy:X-ray} in
normalization). Here $\chi$ is a non-negative smooth compactly
supported function, $\chi(0)>0$, which is appropriately chosen as in
\cite{Uhlmann-Vasy:X-ray}, see also Lemma~\ref{lemma:finite}. The particular smooth measure $d\lambda\,d\omega$
is irrelevant; any other positive definite smooth measure will
do. Note that the measure has nothing to do with the Euclidean metric
$g_0$ (which plays a role in the transverse isotropy!)
in particular, and similarly the coordinates $x_j$ have nothing to do
with Euclidean metric.

The main terms in \eqref{eq:SU-mod-form} are the $\hat A^{jl}_i(x,\xi)$ terms; the others
can be absorbed into these by Poincar\'e inequalities, at least {\em if the
$\hat A^{jl}_i(x,\xi)$ terms are non-degenerate}, see \cite{Stefanov-Uhlmann-Vasy:Boundary}. To leading
order at the boundary these decouple due to the $\delta_i^j$, so one
is essentially working on a microlocally weighted X-ray transform
combining the differences of the unknown material parameters; more
precisely one has a transform for each derivative of the combinations
of the differences
of these unknown material parameters. (One of course has to deal with
these transforms together as done in
\cite{Stefanov-Uhlmann-Vasy:Boundary} and follow-up papers.)
Thus, one may consider the simplified transforms
\begin{equation}\label{eq:SU-simple-form}
\tilde J \tilde f(\gamma)=\sum_{l=1}^k\int
(\tilde A^{l}(X(t),\Xi(t))\tilde f_l(X(t))\,dt=0
\end{equation}
with
\begin{equation*}\begin{aligned}
\tilde
A^{l}(x,\xi)&=-\frac{\pa\tilde\Xi_j}{\pa\xi_j}(\tau(x,\xi),(x,\xi))E^l(x,\xi),\\
\tilde A^{l}(x,\xi)&=- \frac{\pa
  P}{\pa\nu_l}(\nu_1(x),\ldots,\nu_k(x),x,\xi)\ \text{at}\ \pa M,\\
\tilde f_l&=\pa_j f_l,
\end{aligned}\end{equation*}
with $j$ fixed.

The following proposition is proved completely analogously to \cite[Corollary~3.1]{Stefanov-Uhlmann-Vasy:Boundary}:

\begin{prop}\label{prop:J-tilde-J}
If the operator $e^{-\digamma/x}L\tilde Je^{\digamma/x}$ is elliptic
when considered as a map from a single component of $\tilde f_l$
(i.e. with the others set to $0$), the ellipticity of the full
operator $e^{-\digamma/x}LJe^{\digamma/x}$ follows as a map for the corresponding component,
provided the artificial boundary is sufficiently close to $\pa
M$.
\end{prop}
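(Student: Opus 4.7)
The plan is to show that $e^{-\digamma/x}LJe^{\digamma/x}$ differs from $e^{-\digamma/x}L\tilde Je^{\digamma/x}$ by a scattering pseudodifferential operator whose boundary principal symbol at finite points of the scattering cotangent bundle can be made arbitrarily small by bringing the artificial boundary $\{x=0\}$ close to $\pa M$. Since ellipticity at finite points of the scattering cotangent bundle is an open condition, this transfers ellipticity from $L\tilde J$ to $LJ$ on the given single component.

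First, I would decompose $LJ=L\tilde J+LR_1+LR_2$, where $R_1$ collects the off-diagonal-in-$(i,j)$ contributions from $\hat A^{jl}_i$, namely from the difference $\hat A^{jl}_i+\delta^j_i\,E^l\,\frac{\pa P}{\pa\nu_l}$ acting on $\pa_{x^j}f_l$, and $R_2$ collects the zero-order contributions $\hat B^l_i f_l$. The boundary identities recorded immediately before the statement give $\hat A^{jl}_i=-\delta^j_i\frac{\pa P}{\pa\nu_l}$ at $\pa M$, so the coefficients of $R_1$ vanish at $\pa M$ and hence are $O(x_0)$ on a slab of width $x_0$ between the artificial boundary and $\pa M$. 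The piece $R_2$ is an undifferentiated transform of $f_l$ along the same bicharacteristics; combined with the fact that the media agree at $\pa M$ (so $f_l$ vanishes there), a Poincar\'e inequality on the thin slab lets one bound $R_2$ by the leading derivative transform with a constant $O(x_0)$.

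Next, I would invoke the scattering calculus framework of \cite{Uhlmann-Vasy:X-ray, Stefanov-Uhlmann-Vasy:Boundary}: after conjugation by $e^{\digamma/x}$, each of $LJ$, $L\tilde J$, $LR_1$, $LR_2$ lies in Melrose's scattering algebra $\Psisc$, and conjugation replaces $\pa_{x^j}$ by its scattering-rescaled form in a controlled way. Computing scattering boundary principal symbols at finite points, those of $e^{-\digamma/x}LR_1 e^{\digamma/x}$ and $e^{-\digamma/x}LR_2 e^{\digamma/x}$ are $O(x_0)$ in the uniform norm used to measure ellipticity, while that of $e^{-\digamma/x}L\tilde J e^{\digamma/x}$ is by hypothesis bounded below uniformly on the relevant single component. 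Choosing $x_0$ small enough makes the perturbation strictly subordinate to that lower bound, yielding the claimed ellipticity of the full operator.

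The main obstacle, and the content of the analogous step in \cite[Corollary~3.1]{Stefanov-Uhlmann-Vasy:Boundary}, is verifying that the off-diagonal decay rate of $\hat A$ at $\pa M$ and the Poincar\'e constant on the thin slab both survive the conjugation by $e^{\digamma/x}$ (which is large near the artificial boundary) and the composition with the weighted averaging operator $L$ (which carries the weight $x^{-2}$ and the cutoff $\chi(\lambda/x)$). Once one checks that the combinations arising from $J$ and $\tilde J$ here lie in exactly the class of transforms for which the analysis of \cite{Stefanov-Uhlmann-Vasy:Boundary} applies, the argument goes through verbatim.
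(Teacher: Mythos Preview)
Your proposal is correct and matches the paper's approach: the paper does not give an explicit proof but states that the argument is completely analogous to \cite[Corollary~3.1]{Stefanov-Uhlmann-Vasy:Boundary}, and the informal explanation following the proposition singles out precisely the two ingredients you identify, namely the vanishing of the off-diagonal part of $\hat A^{jl}_i$ at $\pa M$ and the Poincar\'e-inequality absorption of the $\hat B^l_i$ terms on a thin slab. Your last paragraph also correctly locates where the actual work lies (surviving the conjugation by $e^{\digamma/x}$ and the $x^{-2}\chi(\lambda/x)$ weight in $L$), which is exactly what is borrowed from the cited reference.
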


Roughly
speaking, the hypothesis of this proposition says that {\em ignoring coupling} one can recover
the derivatives of $f_l$ (due to ellipticity, choosing the artificial
boundary sufficiently close to $\pa M$),
which then, as the conclusion states, allows one to recover $f_l$, though due to the coupling in $LJ$, a
Poincar\'e inequality based argument is needed as in
\cite[Corollary~3.1]{Stefanov-Uhlmann-Vasy:Boundary}. Because of this
proposition, in what follows we concentrate on properties of $L\tilde J$.

Now, with $p_\pm=G_{qP/qsV}$ (with $+$ for qP; notice that $p_\pm$
stands for $P$ above), and with tilded
coordinates corresponding to the transversely isotropic structure,
{\em not} the convex foliation, we have from \eqref{eq:G-qP-qSV} that
\begin{equation}\begin{aligned}\label{eq:material-derivs}
\frac{\pa p_\pm}{\pa E^2}&=\mp\frac{2|\tilde\xi'|^2\tilde\xi_3^2}{\sqrt{\big((a_{11}-a_{55})|\tilde\xi'|^2+(a_{33}-a_{55})\tilde\xi_3^2\big)^2-4E^2|\tilde\xi'|^2\tilde\xi_3^2}},\\
\frac{\pa p_\pm}{\pa a_{11}}&=|\tilde\xi'|^2\Big(1\pm\frac{(a_{11}-a_{55})|\tilde\xi'|^2+(a_{33}-a_{55})\tilde\xi_3^2}{\sqrt{\big((a_{11}-a_{55})|\tilde\xi'|^2+(a_{33}-a_{55})\tilde\xi_3^2\big)^2-4E^2|\tilde\xi'|^2\tilde\xi_3^2}}\Big),\\
\frac{\pa p_\pm}{\pa a_{33}}&=\tilde\xi_3^2\Big(1\pm\frac{(a_{11}-a_{55})|\tilde\xi'|^2+(a_{33}-a_{55})\tilde\xi_3^2}{\sqrt{\big((a_{11}-a_{55})|\tilde\xi'|^2+(a_{33}-a_{55})\tilde\xi_3^2\big)^2-4E^2|\tilde\xi'|^2\tilde\xi_3^2}}\Big).
\end{aligned}\end{equation}

We summarize some immediate definiteness properties (keep in mind the background assumption that
$\max\{a_{55},a_{66}\}<\min\{a_{11},a_{33}\}$) of these material derivatives in
a lemma:

\begin{lemma}\label{lemma:material-derivs}
We have:
\begin{enumerate}
\item
$\frac{\pa p_+}{\pa a_{11}}$ is a positive definite multiple of
$|\tilde\xi'|^2$, thus is in particular non-negative.
\item
$\frac{\pa p_+}{\pa a_{33}}$ is a positive definite multiple of
$\tilde\xi_3^2$, thus is in particular non-negative.
\item
$\frac{\pa p_\pm}{\pa E^2}$ are positive definite multiples of
$\mp|\tilde\xi'|^2\tilde\xi_3^2$, thus is in particular non-positive/non-negative.
\item
If $E^2>0$, $\frac{\pa p_-}{\pa a_{11}}$ is negative definite
multiple of $|\tilde\xi'|^2$ away from $\tilde\xi_3=0$ and from
$\tilde\xi'=0$, and is everywhere non-positive; the analogous
statement holds if $E^2<0$ with `positive' and `negative' reversed.
\item
If $E^2>0$, $\frac{\pa p_-}{\pa a_{33}}$ is a negative definite
multiple of $\tilde\xi_3^2$ away from $\tilde\xi_3=0$ and from $\tilde\xi'=0$, and is everywhere non-positive; the analogous
statement holds if $E^2<0$ with `positive' and `negative' reversed.
\end{enumerate}
\end{lemma}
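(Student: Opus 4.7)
The plan is to verify each of the five items by directly inspecting the three formulas in \eqref{eq:material-derivs}. To streamline bookkeeping I would introduce the abbreviations
$$
S := (a_{11}-a_{55})|\tilde\xi'|^2+(a_{33}-a_{55})\tilde\xi_3^2,\qquad D := \sqrt{S^2-4E^2|\tilde\xi'|^2\tilde\xi_3^2},
$$
so the three material derivatives read $\partial_{E^2}p_\pm = \mp 2|\tilde\xi'|^2\tilde\xi_3^2/D$, $\partial_{a_{11}}p_\pm = |\tilde\xi'|^2(1 \pm S/D)$, and $\partial_{a_{33}}p_\pm = \tilde\xi_3^2(1 \pm S/D)$. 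The standing hypothesis $\max\{a_{55},a_{66}\}<\min\{a_{11},a_{33}\}$ makes $S$ a non-negative linear combination of $|\tilde\xi'|^2$ and $\tilde\xi_3^2$, vanishing only at the origin of the fiber.

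Next I would check that the radicand defining $D$ is non-negative (so $D$ is real and well-defined). For $E^2\leq 0$ this is automatic; for $E^2>0$ the definition of $E^2$ together with $(a_{13}+a_{55})^2\geq 0$ gives $E^2\leq (a_{11}-a_{55})(a_{33}-a_{55})$, and then AM-GM yields
$$
4E^2|\tilde\xi'|^2\tilde\xi_3^2 \leq 4(a_{11}-a_{55})(a_{33}-a_{55})|\tilde\xi'|^2\tilde\xi_3^2 \leq \bigl((a_{11}-a_{55})|\tilde\xi'|^2+(a_{33}-a_{55})\tilde\xi_3^2\bigr)^2 = S^2.
$$
Thus $D\geq 0$, and $D>0$ off a thin exceptional locus, so the fractions $S/D$ and $|\tilde\xi'|^2\tilde\xi_3^2/D$ have the signs of their numerators.

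Claim (3) is then immediate: $\partial_{E^2}p_\pm$ equals $\mp|\tilde\xi'|^2\tilde\xi_3^2$ times the strictly positive factor $2/D$. For (1) and (2), the $+$-sign bracket satisfies $1+S/D\geq 1$ because $S, D\geq 0$, giving a positive-definite multiple of $|\tilde\xi'|^2$, respectively $\tilde\xi_3^2$. For (4) and (5), the $-$-sign bracket $1-S/D$ has the same sign as $D^2-S^2=-4E^2|\tilde\xi'|^2\tilde\xi_3^2$ (since $D+S>0$ generically), so it is $\leq 0$ when $E^2>0$ and $\geq 0$ when $E^2<0$, with strict inequality exactly when both $\tilde\xi_3\neq 0$ and $\tilde\xi'\neq 0$ — which matches the lemma's statements verbatim.

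There is no genuine obstacle here; the lemma is a direct sign calculation once one has the real-valuedness of $D$ in hand via the AM-GM argument above. The only mild care needed is at the exceptional locus where $D=0$: the assertions should then be read in the limiting sense (the factors $|\tilde\xi'|^2$ and $\tilde\xi_3^2$ pin down the correct overall vanishing behavior), which is consistent with how the lemma is used in subsequent sections through the formulas \eqref{eq:SU-simple-form}.
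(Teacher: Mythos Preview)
Your proposal is correct and in the same spirit as the paper, which in fact gives no proof at all of this lemma: it is introduced as summarizing ``some immediate definiteness properties'' of \eqref{eq:material-derivs} and left without a proof environment. Your write-up simply fills in the routine sign analysis (including the AM--GM check that $D$ is real) that the authors regard as implicit in the formulas.
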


\begin{rem}
Notice that when $E^2=0$, $\frac{\pa p_-}{\pa a_{11}}=0$ and
$\frac{\pa p_-}{\pa a_{33}}=0$, i.e., $a_{11}$ and $a_{33}$ affect
only $p_+$. (In isotropic elasticity, $a_{11}=a_{33}=\lambda+2\mu$ in
terms of the Lam\'e parameters, while $a_{55}=\mu$, and $E^2=0$,
$a_{13}=\lambda$, so this is the statement that the S waves are
insensitive to $\lambda$.)
\end{rem}

As we recall below, the derivatives in \eqref{eq:material-derivs} will
be evaluated at the points in the cotangent bundle on the support of
the cutoff $\chi$ in $L$, which means that points near the image of
the tangent space of the level sets of the convex foliation under the
local inverses $H_x^{-1}$.  Furthermore, for the principal symbol
computation for $L\tilde J$ at a point $(x,\zeta)$, by stationary
phase, one actually needs the base tangent vector to the
bicharacteristic, $H_x(\xi)=\lambda\,\pa_x+\omega\cdot\pa_y$, be
annihilated by $\sum_j\zeta_j\,dx_j$, though this needs to be suitably
interpreted at the artificial boundary since $\zeta$ is actually a
scattering covector.

\subsection{Principal symbols}

Concretely, for the standard principal symbol computation one writes the
projected bicharacteristics through $x,y$, with tangent vector
$\lambda\pa_x+\omega\cdot\pa_y$ at that point, in the form
$$
\gamma(t)=\gamma_{x,y,\lambda,\omega}(t)=(\gamma^{(1)}_{x,y,\lambda,\omega}(t), \gamma^{(2)}_{x,y,\lambda,\omega}(t))=
(x+\lambda t+\alpha
t^2+O(t^3),y+\omega t+O(t^2)),
$$
where the $O$'s are understood to mean the indicated prefactor times a
smooth function of all variables,
$$
(x_1,x_2,x_3)=((x_1,x_2),x_3)=(y,x)=z,
$$
where
$\alpha=\alpha(x,y,\lambda,\omega)$, see Section~4 of
\cite{Stefanov-Uhlmann-Vasy:Rigidity-Normal}, including expanding all
the $O$ error terms into smooth functions. Thus, to match the
notation of this paper and \cite{Uhlmann-Vasy:X-ray}, the scalar
function $x$ stands for $x_3$ in terms of the vector coordinates
$(x_1,x_2,x_3)$, and yet it is written as the first component of $\gamma$. (Technically, that
section of
\cite{Stefanov-Uhlmann-Vasy:Rigidity-Normal} is in the more complicated one form setting, so there are
various slight simplifications in the computations for the present
purposes.)
Then
\begin{equation*}\begin{aligned}
&(\gamma(t),\dot\gamma(t))=(x+\lambda t+\alpha
t^2+O(t^3),y+\omega t+O(t^2),\lambda+2\alpha
t+O(t^2),\omega+O(t)),
\end{aligned}\end{equation*}
and scaling
$\hat\lambda=\lambda/x$, $\hat t=t/x$, as relevant below for the
oscillatory integral, gives
\begin{multline*}
(\gamma(x\hat t),\dot\gamma(x\hat t))
=(x+x^2(\hat\lambda \hat t+\alpha
\hat t^2+xO(\hat t^3)),
\\
y+x(\omega \hat t+xO(\hat
t^2)),x(\hat\lambda+2\alpha
\hat t+xO(\hat t^2)),\omega+xO(\hat
t)).
\end{multline*}
The weight $\hat A^{jl}_i$ is on the phase space due to the
Hamiltonian dynamics used in the Stefanov-Uhlmann formula, so the
tangent vector $\dot\gamma(t)$ needs to be converted to a covector via
the local inverse $H_{\gamma(t)}^{-1}$ of the Hamilton vector field map.

The symbol whose left quantization is
\begin{equation}\label{eq:def-of-conjugated-LJ}
A_{l,\digamma}=e^{-\digamma/x}L\tilde Je^{\digamma/x},
\end{equation}
considered restricted to
functions with only non-vanishing $l$th components,
is, cf.\ \cite[Equation~(4.9)]{Stefanov-Uhlmann-Vasy:Rigidity-Normal},
combined with the weights discussed in \cite[Equation~(6.15)]{Stefanov-Uhlmann-Vasy:Rigidity-Normal},
\begin{multline} \label{eq:1-form-kernel-sc-form}
a_{l,\digamma}(x,y,\zeta)=
\int e^{-\digamma/x}
e^{\digamma/\gamma^{(1)}_{x,y,\lambda,\omega}(t)}
e^{i(\zeta_3/x^2,\zeta'/x)\cdot(\gamma^{(1)}_{x,y,\lambda,\omega}(t)-x,
  \gamma^{(2)}_{x,y,\lambda,\omega}(t)-y)}
\\
\tilde A^{l}(\gamma(t),H_{\gamma(t)}^{-1}(\dot\gamma(t)))\chi(\lambda/x) 
\,dt\,d\lambda\,d\omega,
\end{multline}
where $\zeta$ is the scattering
coordinate (so covectors are
$\zeta_3\frac{dx}{x^2}+\zeta'\cdot\frac{dy}{x}$). After rescaling
$\lambda$ and $t$, this becomes a non-degenerate oscillatory integral with critical points at the codimension 2 submanifold
\begin{equation}\label{eq:phase-crit-set}
\hat t=0,\ \zeta_3\hat\lambda+\zeta'\cdot\omega=0.
\end{equation}
Note that if $\zeta'$ is large relative to $\zeta_3$, i.e., if we stay
away from a conic neighborhood of $\zeta'=0$, one can use $\hat t$ and
$\omega^\parallel$ as the variables in which the stationary phase is
performed, where $\omega$ is decomposed relative to $\zeta'$ into a
parallel and a perpendicular vector; then $\hat\lambda,\omega^\perp$
parameterize the critical set. On the other hand, if $\zeta_3$ is
large relative to $\zeta'$, then one can use $\hat t$ and
$\hat\lambda$ as the variables in which stationary phase is performed;
then $\omega$ parameterizes the critical set.
Hence, substituting the above expressions for $\gamma,\dot\gamma$, we
conclude that up to errors that are
$O(x\langle\zeta\rangle^{-1})$ relative to
the a priori order, $(-1,0)$, arising from the $0$th order symbol in
the oscillatory integral and the 2-dimensional space in which the
stationary phase lemma is applied,
\begin{multline} \label{eq:aj-1-form-pre-xi}
a_{l,\digamma}(x,y,\zeta)
\\
=\int e^{i(\zeta_3(\hat\lambda\hat t+\alpha\hat t^2)+\zeta'\cdot(\omega\hat
t))}e^{-\digamma(\hat\lambda\hat t+\alpha\hat
t^2)}\tilde A^{l}(x,y,H_{x,y}^{-1}(x\hat\lambda,\omega))\chi(\hat\lambda)\,d\hat t\,d\hat \lambda\,d\omega.
\end{multline}
At this point we apply the stationary phase lemma. Up to an overall
elliptic factor, this results in an integral of
$$
\tilde A^{l}(x,y,H_{x,y}^{-1}(x\hat\lambda,\omega))\chi(\hat\lambda)
$$
over the critical set with a positive weight. In particular, if this
has a fixed indefinite sign, e.g.\ is non-negative at all points of, and
is actually definite (positive in the example) at one point of, the
critical set, the resulting operator is elliptic.
Note that we are using $\chi\geq 0$ with $\chi(0)>0$, so if $\tilde
A^{l}$ has a fixed indefinite sign, the key
question is if there is a point on the critical set with $\hat\lambda$
small at which $\tilde
A^{l}$ has a definite sign.

Notice \eqref{eq:phase-crit-set} states that if $\zeta_3=0$, the
tangent vector $\omega$ is annihilated by $\zeta'$ and $\hat\lambda$ is
arbitrary (with the localizing cutoff $\chi$ keeping it in a compact region);
otherwise $\omega$ is actually arbitrary, though if $\zeta_3$ is small
relative to $\zeta'$,
this requires a very large $\hat\lambda$, which may fall outside the
support of the cutoff $\chi$; in any case as long as the cutoff is
non-trivial at zero, a neighborhood of those $\omega$ annihilated by
$\zeta'$ is relevant. Notice that at $x=0$, {\em regardless of the value of
  $\hat\lambda$}, the corresponding tangent vector is just
$(x\hat\lambda)\pa_x+\omega\cdot\pa_y=\omega\cdot\pa_y$, cf.\ the
argument of $\tilde A^{l}$.

Now, if one regards two of $a_{11},a_{33},E^2$ as known, thus the
value of $l$ is
fixed to be the remaining single unknown, then,
corresponding to \eqref{eq:SU-mod-form}, at the artificial
boundary the standard principal symbol of $A_{l,\digamma}=e^{-\digamma/x}L\tilde Je^{\digamma/x}$ is, up to overall elliptic
factors, simply an integral with a weight given by the remaining
unknown's derivative, so for instance $\frac{\pa p_\pm}{\pa E^2}$ if $E^2$ is not known.
Namely, the principal symbol of $A_{l,\digamma}=e^{-\digamma/x}L\tilde Je^{\digamma/x}$ is an
integral over all $\omega$ (except in the special case when
$\zeta_3=0$, when only two values of $\omega$ enter) at covectors
$(z,\xi)$ where the covectors $\xi$ are the images of $(0,\omega)$
under the map
$H_z^{-1}$, which, as we recall, is the local inverse of $H_z$. Thus, if these partials are
either positive at all points or negative at all points, or simply
non-negative, resp.\ non-positive at all points with a strict sign at
{\em one} point, the principal
symbol is elliptic, since it is an integral of these expressions with
respect to a smooth positive measure, up to an overall elliptic
factor. 

Now, for $p_+$ all the partials
are non-zero as long as $\tilde\xi'$ and $\tilde\xi_3$ are
non-zero, with $\frac{\pa p_+}{\pa E^2}$ negative, the others positive; for $p_-$ the analogous claim holds for the $E^2$ partial,
and in addition for the $a_{11}$ and $a_{33}$ partials provided that
$E^2>0$.

For principal symbol computations we only need to consider the tangent
plane to the artificial boundary (and nearby level sets in the
interior); the question is whether the potential degeneracy of the
weights at $\tilde\xi'=0$ or $\tilde\xi_3=0$ provides an obstruction
to a strict sign at at least one point of relevance.

First consider the degeneracy of some of our
weights, such as $\frac{\pa
  p_+}{\pa a_{11}}$, at $\tilde\xi'=0$.

\begin{lemma}
If the gradient $\nabla
f$ of the transverse isotropy foliation function is not parallel to the artificial boundary,
points with $\tilde\xi'=0$ cannot give rise to vectors tangent to the
artificial boundary under Hamiltonian map $H_x$.
\end{lemma}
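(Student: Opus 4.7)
The strategy is to exploit the rotational symmetry of $G_{qP/qSV}$ about the isotropy axis to show that, at covectors with $\tilde\xi'=0$, the Hamilton vector field map $H_x$ produces vectors pointing along the axis direction, which is precisely the direction of $\nabla f$. The hypothesis that $\nabla f$ is not tangent to the artificial boundary then immediately yields the conclusion.

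I would work in the tilded coordinates $(\tilde x_1,\tilde x_2,\tilde x_3)$ that are $g_0$-orthonormal at the chosen point $x$ and have $\partial_{\tilde x_3}|_x$ aligned with the isotropy axis; in particular $\partial_{\tilde x_3}|_x$ is a positive scalar multiple of $\nabla^{g_0} f|_x$. The key observation is a symmetry one: formula \eqref{eq:G-qP-qSV} shows that $G_{qP/qSV}$ depends on $(\tilde\xi_1,\tilde\xi_2)$ only through the combination $|\tilde\xi'|^2$, both in the polynomial terms and inside the square root. Hence for $j=1,2$, $\partial_{\tilde\xi_j} G_{qP/qSV}$ carries an overall prefactor of $\tilde\xi_j$ and therefore vanishes on $\{\tilde\xi'=0\}$. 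A brief check is needed that the derivative of the square root is well-defined there, but at $\tilde\xi'=0$ the radicand reduces to the nonzero perfect square $((a_{33}-a_{55})\tilde\xi_3^2)^2$ as long as $\tilde\xi_3\neq 0$, so smoothness and the chain rule produce the claimed vanishing.

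Consequently, for $\tilde\xi$ with $\tilde\xi'=0$ and $\tilde\xi_3\neq 0$, only the $\partial_{\tilde\xi_3}$ component in the expression for $H_x(\tilde\xi)$ survives. A direct evaluation from \eqref{eq:G-qP-qSV}, using $a_{33}>a_{55}$ to resolve the absolute value arising from the square root, gives $\partial_{\tilde\xi_3}G_{qP}|_{\tilde\xi'=0}=4 a_{33}\tilde\xi_3$ and $\partial_{\tilde\xi_3}G_{qSV}|_{\tilde\xi'=0}=4 a_{55}\tilde\xi_3$, both nonzero by the standing positivity assumptions on the material parameters. Thus $H_x(\tilde\xi)$ is a nonzero scalar multiple of $\partial_{\tilde x_3}|_x$, hence of $\nabla f|_x$, and the hypothesis that $\nabla f$ is not parallel to (i.e.\ not tangent to) the artificial boundary completes the argument. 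The only genuinely nontrivial ingredient is the symmetry observation; everything else is direct substitution, so I do not expect serious obstacles.
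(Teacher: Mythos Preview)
Your proposal is correct and follows essentially the same approach as the paper's proof: both observe that at $\tilde\xi'=0$ the base component of the Hamilton vector field is a multiple of $\partial_{\tilde x_3}$, hence of $\nabla f$, which by hypothesis is not tangent to the artificial boundary. You supply more detail than the paper does---the explicit use of the $|\tilde\xi'|^2$ symmetry, the check that the radicand is a nonzero perfect square at $\tilde\xi'=0$, and the explicit values $4a_{33}\tilde\xi_3$, $4a_{55}\tilde\xi_3$ for the surviving component---whereas the paper simply asserts that the vector is a multiple of $\partial_{\tilde x_3}$; but the underlying idea is identical.
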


Note that the hypothesis on $\nabla f$ follows at least sufficiently
close to the actual boundary under the conditions we discussed for the
qSH waves, which we are assuming.

\begin{proof}
With an abuse of notation $\sum_j\frac{\pa
  p_\pm}{\pa\xi_j}\pa_{x_j}$ is $\sum_j\frac{\pa
  p_\pm}{\pa\tilde\xi_j}\pa_{\tilde x_j}$, both being the base
component, i.e. pushforward to the base, of the Hamilton vector
field, expressed in different coordinates.
But if $\tilde\xi'$ vanishes, this vector is a multiple of
$\pa_{\tilde x_3}$, so is parallel to the axis of transverse isotropic elasticity. Correspondingly, if the gradient $\nabla
f$ of the transverse isotropy foliation function is not parallel to the artificial boundary, which we are assuming, then $\tilde\xi'$ cannot
vanish at the relevant points as $\pa_{\tilde x_3}$ cannot be both parallel
to the axis {\em and} tangent to the level sets of the convex
foliation function.
\end{proof}

Thus, if we have a weight which is
non-negative and only
vanishes if $\tilde\xi'=0$, such as $\frac{\pa
  p_+}{\pa a_{11}}$, arising when we are attempting to
recover $a_{11}$ from $p_+$ travel times, we indeed have ellipticity at the standard principal
symbol level, i.e., $A_{l,\digamma}$ is elliptic in the standard sense
when $l$ corresponds
to $a_{11}$ and the wave speed used is $p_+$. {\em Together with Lemma~\ref{lemma:finite} below, this
  proves that the qP travel times determine $a_{11}$, in the sense of Theorem~\ref{thm:p-sv}.}

Unfortunately, there are points in the tangent plane at the artificial
boundary with $\tilde\xi_3=0$, however, which is an issue for the
determination of $E^2$ and $a_{33}$. Indeed, this is exactly the
statement that there are points in the tangent plane annihilated by
the differential
$df$ of the foliation function, i.e. orthogonal to $\nabla f$, which
happens even under the conditions we discussed for the qSH waves.
Since we still have a fixed though degenerate sign for our weight (so
it can vanish, but is $\geq 0$ everywhere or $\leq 0$ everywhere), this is
only an issue if the weight vanishes at every point at which the weight is evaluated in the
principal symbol.

Note the vanishing phenomenon for the weight occurs for all relevant
covectors even away from the
artificial boundary:

\begin{lemma}
Away from the artificial boundary, in $x>0$, the only points $\zeta$
for which
$\tilde\xi_3=0$ at all points on the critical set near the tangent
space of the foliation giving the artificial boundary are those in the
span of $df$.
\end{lemma}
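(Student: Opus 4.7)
The plan is to reduce the claim to a linear-algebra statement: two $2$-dimensional hyperplanes in the $3$-dimensional tangent space $T_xM$ that share an open subset must coincide. First, from \eqref{eq:G-qP-qSV}, the Hamiltonians $p_\pm = G_{qP/qSV}$ depend on $\tilde\xi_3$ only through $\tilde\xi_3^2$, so $\partial p_\pm/\partial\tilde\xi_3$ carries an overall factor of $\tilde\xi_3$ and vanishes on $\{\tilde\xi_3=0\}$. Hence, at any covector $\xi$ with $\tilde\xi_3=0$, the base vector $H_x(\xi)=\sum_j(\partial p/\partial\tilde\xi_j)\,\partial_{\tilde x_j}$ has no $\partial_{\tilde x_3}$ component. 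Since $\Span\{d\tilde x_3\} = \Span\{df\}$ by the integrability hypothesis of Theorem~\ref{thm:sh} (the isotropy axis is normal to the $f$-level sets), this says $H_x(\xi)\in\Ker df$.

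Next, I would invoke the non-degeneracy of the material relative to the convex foliation (Definition~\ref{def:non-deg-transverse}) together with the setup of the localization $\{\tilde\phi_i\}$ used to define $\tilde L$: $H_x$ is a local diffeomorphism on a neighborhood of those covectors mapping to tangent vectors near the foliation tangent plane. Combining with the previous paragraph and a dimension count (both $\{\tilde\xi_3=0\}\subset T^*_xM$ and $\Ker df\subset T_xM$ are $2$-dimensional), the local inverse $H_x^{-1}$ identifies $\Ker df$ with $\{\tilde\xi_3=0\}$ throughout this region. In particular, for a relevant tangent vector $v$, $H_x^{-1}(v)$ lies in $\{\tilde\xi_3=0\}$ if and only if $v\in\Ker df$.

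Finally, from \eqref{eq:phase-crit-set}, the critical set consists of tangent vectors $v = x\hat\lambda\,\partial_x + \omega\cdot\partial_y$ annihilated by the scattering covector $(\zeta_3/x^2)\,dx + (\zeta'/x)\cdot dy$; at fixed $x>0$ this is a $2$-dimensional hyperplane in $T_xM$. Because $\chi(\hat\lambda)$ is supported near $0$ with $\chi(0)>0$, the relevant portion of the critical set, namely the part near the tangent space of the foliation, contains an open subset of this hyperplane. If $\tilde\xi_3=0$ holds throughout the relevant critical set, the previous paragraph forces that open subset to lie in $\Ker df$, which in turn forces the two hyperplanes $\Ker\zeta$ and $\Ker df$ to coincide, equivalently $\zeta\in\Span\{df\}$.

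The main subtlety to watch, more than any calculation, is confirming that the full relevant portion of the critical-set hyperplane actually lies inside the open region on which $H_x$ has the smooth local inverse taking $\Ker df$ onto $\{\tilde\xi_3=0\}$; this is exactly what the non-degeneracy hypothesis and the partition of unity $\{\tilde\phi_i\}$ are designed to provide, so no further delicate argument beyond these inputs is needed.
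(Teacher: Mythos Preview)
Your proof is correct and follows essentially the same approach as the paper: both reduce to the linear-algebra fact that two distinct $2$-planes in a $3$-dimensional space meet only in a line, so an open piece of $\Ker\zeta$ cannot lie in $\Ker df$ unless the two coincide. The paper argues the contrapositive directly (for $\zeta\notin\Span\{df\}$ the ``bad'' set $\Ker\zeta\cap\Ker df$ is a line, hence misses generic nearby tangent vectors), while you phrase it as ``open subset of one hyperplane contained in the other forces equality''; you are also more explicit than the paper in deriving the equivalence $\tilde\xi_3=0\Leftrightarrow H_x(\xi)\in\Ker df$ from the $\tilde\xi_3^2$-dependence of the Hamiltonian and the local diffeomorphism property of $H_x$, which the paper takes as understood.
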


\begin{proof}
First,
at covectors in the span of $df$, we are integrating
over integral curves with tangent vectors annihilated by $df$, but at
all of these $\tilde\xi_3=0$. On the other hand, for any covector $\zeta$ not
in the span of $df$, the set of `bad vectors' annihilated by both
$\zeta$ and $df$ is a
line, so in any open set of vectors annihilated by $\zeta$ (which thus
form a 2-dimensional family), such as those in an arbitrarily
small neighborhood of the tangent space to the level set of the
foliation, there will be vectors which are {\em not} in the `bad set',
proving the lemma.
\end{proof}

Thus, {\em away from the boundary ellipticity can only fail at points in
  the span of $df$}, where we have already seen that it {\em does
  fail}. That the failure is quadratic follows simply from the fixed
(degenerate) sign of the principal symbol.

We now turn to the non-degeneracy of the quadratic vanishing:

\begin{lemma}
Suppose that the hypothesis of Lemma~\ref{lemma:non-deg-diff-xi3} holds.  
For $A_{l,\digamma}$ corresponding to $E^2$ and $a_{33}$, the quadratic
vanishing of the principal symbol at the span of $df$ is non-degenerate in $x>0$.
\end{lemma}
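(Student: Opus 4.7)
The plan is to Taylor-expand the principal symbol of $A_{l,\digamma}$ around a covector $\zeta_0\in\Span(df)$ at an interior point $z=(x,y)$, $x>0$, and extract the leading quadratic behaviour in directions transverse to $\Span(df)$. The starting point is the stationary phase calculation already performed above, which, after the substitutions leading to \eqref{eq:aj-1-form-pre-xi}, realizes the principal symbol, up to a nowhere-vanishing elliptic prefactor, as
\[
\sigma(z,\zeta) = \int_{\{\zeta_3\hat\lambda+\zeta'\cdot\omega=0\}} \tilde A^l\bigl(x,y,H^{-1}_{x,y}(x\hat\lambda,\omega)\bigr)\,\chi(\hat\lambda)\,dS,
\]
with $dS$ a smooth positive density on the two-dimensional critical set. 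For $l\in\{E^2,a_{33}\}$, formulas \eqref{eq:material-derivs} combined with Lemma~\ref{lemma:material-derivs} show that $\tilde A^l$ is a smooth multiple of $\tilde\xi_3^2$ whose coefficient is of one fixed sign throughout the relevant region, where $\tilde\xi=H^{-1}_{x,y}(x\hat\lambda,\omega)$.

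Next, I would use Lemma~\ref{lemma:non-deg-diff-xi3} to translate the vanishing of $\tilde\xi_3$ into vanishing of $df(v)$ for the tangent vector $v=x\hat\lambda\,\pa_x+\omega\cdot\pa_y$: under its hypothesis, $H_{x,y}$ is a local diffeomorphism near $\{\tilde\xi_3=0\}$ with positive-definite fibre-Hessian of $p_\pm$, hence
\[
\tilde\xi_3 = \kappa_0(z,v)\,df(v)+O\bigl((df(v))^2\bigr)
\]
with $\kappa_0$ a smooth, strictly positive function given by the reciprocal of the $(3,3)$ Hessian entry up to the positive rescaling identifying $d\tilde x_3$ with $df$. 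Writing $\zeta=\zeta_0+s\eta$ with $\zeta_0=c_0\,df$, $c_0\neq 0$, and $\eta$ in a fixed complement of $\Span(df)$, the perturbed critical-set constraint $\zeta\cdot v=0$ yields $df(v)=-(s/c_0)\,\eta(v)$, and substituting gives
\[
\tilde A^l = s^2\,\kappa(z,v)\,\eta(v)^2+O(s^3),
\]
with $\kappa$ smooth and of fixed sign, and therefore
\[
\sigma(\zeta_0+s\eta) = s^2\int_{\{\zeta_0\cdot v=0\}}\kappa(z,v)\,\eta(v)^2\,\chi(\hat\lambda)\,dS(v)+O(s^3).
\]

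The remaining step is algebraic: since the critical plane $\{\zeta_0\cdot v=0\}$ is two-dimensional inside the three-dimensional fibre and $\eta\notin\Span(\zeta_0)$ exactly when $\eta\notin\Span(df)$, in this case $\eta|_{\{\zeta_0\cdot v=0\}}$ is a non-zero linear functional, so $\eta(v)^2$ is strictly positive on an open subset of the critical plane. Combined with $\chi(0)>0$ and the non-vanishing of $\kappa$ near the axis-orthogonal plane, the integrand is strictly of one fixed sign on a non-empty open set, which forces the quadratic form $\eta\mapsto\pa_s^2\sigma(\zeta_0+s\eta)\bigr|_{s=0}$ on $T^*_zM/\Span(df)$ to be definite; this is the asserted non-degenerate quadratic vanishing.

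The hard part will be to rule out a possible linear-in-$s$ contribution coming from the displacement of the critical set as $\zeta$ varies; this is controlled by the observation that $\tilde A^l$ itself vanishes identically on the unperturbed critical set $\{\zeta_0\cdot v=0\}$, so the variation of the measure on the perturbed critical set contributes only at order $O(s^3)$ relative to the leading quadratic term. A secondary technical point is confirming that $\kappa_0$ is bounded away from zero on the effective region of integration, which follows from Lemma~\ref{lemma:non-deg-diff-xi3} together with the standing hypothesis that $\nabla f$ is neither parallel nor orthogonal to the artificial boundary: this transversality guarantees that the axis-orthogonal plane meets the critical set $\{\zeta_0\cdot v=0\}$ non-trivially in a neighbourhood of $\hat\lambda=0$.
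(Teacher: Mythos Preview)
Your proof is correct and follows essentially the same approach as the paper's. Both arguments hinge on the same three observations: (i) the weights $\tilde A^l$ are non-degenerate multiples of $\tilde\xi_3^2$; (ii) Lemma~\ref{lemma:non-deg-diff-xi3} lets one convert $\tilde\xi_3$ into a non-degenerate multiple of $df(v)$ near the axis-orthogonal plane; and (iii) on the perturbed critical set $\{(\zeta_0+s\eta)(v)=0\}$ one has $df(v)=-(s/c_0)\eta(v)$, so the weight is $\sim s^2\eta(v)^2$ and the integral is $\sim s^2$ times a positive quantity when $\eta\notin\Span(df)$.

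The only difference is presentation: the paper argues geometrically, showing that $\Ker\zeta_\ep$ and $\Ker df$ meet at angle $\sim\ep$ and then producing a single vector $v_\ep$ in the critical set, close to the tangent space of the convex foliation, with $|\tilde\xi_3|\geq C\ep$; positivity of the integrand then forces the integral to be $\geq c\ep^2$. You instead Taylor-expand the entire integrand and read off the Hessian as the explicit quadratic form $Q(\eta)=\int_{\{\zeta_0\cdot v=0\}}\kappa\,\eta(v)^2\chi\,dS$. Your version has the virtue of making the Hessian explicit and handling the displacement of the critical set cleanly via the second-order vanishing of $\tilde A^l$ on the unperturbed set; the paper's version is more hands-off, bounding the integral from below pointwise without tracking the expansion. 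Either way, the step confirming that $\eta(v)$ is not identically zero on the localized critical set (your final paragraph) is exactly the paper's observation that a non-zero $v\in\Ker df$ tangent to the foliation exists under the standing transversality hypotheses.
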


\begin{proof}
The lemma follows from showing that along any line
transversal to the span of $df$ the quadratic vanishing is
non-degenerate, i.e., the second derivative is strictly positive (or
strictly negative) since then the a priori positive (or negative)
indefinite nature of Hessian combined with this fact implies positive
(or negative) definiteness.

But this can be seen as follows: consider $\nu$
not in the span of $df$ and $\zeta=\zeta_\ep=G_0(df)^{-1}\,df+\ep\nu$, where one may assume
that $\nu$ is $G_0$-orthogonal to $df$ and of unit length; the desired
non-degeneracy follows if we find a vector annihilated by $\zeta$ and
close to the tangent space of the convex foliation which is the image,
via the Hamilton vector field map $H_{\tilde x}$, of a covector $\tilde\xi=\tilde\xi_\ep$ that
has $|\tilde\xi_3|\geq C\ep$, $C>0$ (independent of $\ep$), for then the fact that the relevant
weights are non-degenerate multiples of $\tilde\xi_3^2$ proves the
conclusion.
Since the $H_{\tilde x}$ maps covectors with
vanishing third component to $\Ker df$, and $H_{\tilde x}$ has, by Lemma~\ref{lemma:non-deg-diff-xi3}, invertible differential
at $\tilde\xi_3=0$, we see that
for a vector $v'$ there is a  covector mapped to it by
$H_{\tilde x}$ whose
the third (tilded) component is a
non-degenerate multiple of the distance (with respect to any positive
definite inner product on the tangent space) of $v'$ from the kernel
of $df$. Hence, it
suffices to show that $\Ker\zeta$ contains a vector $v'$ which is
$\geq C\ep$, $C>0$, distance from $\Ker \,df$ but is still near the
tangent plane to the convex foliation (so that it is within the
support of the cutoff).

This final claim can be seen as follows: by linear independence, for
$\ep\neq 0$,
$\Ker df$ and $\Ker\zeta_\ep$ intersect in a line in an angle $\sim\ep$
(more precisely, the tangent of the angle is $\ep$), and in any compact
`annulus' (closed ball minus a smaller open ball) centered at a point
in $\Ker df$ at a fixed distance
from the intersection, the distance between a point in $\Ker\zeta_\ep$
and $\Ker df$ is bounded below by $C\ep$, $C>0$ (and above by a
similar expression), so consider a non-zero vector $v$ in the tangent space of the convex
foliation which is annihilated by $df$; 
the tangent
space of the foliation is 2-dimensional and $df$ is not conormal to the level sets of
the convex foliation, so the span of $v$ is well-defined (i.e., there
is no freedom of choice as far as the span of $v$ is concerned); in any fixed
ball around it there is then a vector $v_\ep$ in $\Ker\zeta_\ep$ which
is distance bounded below by $C\ep$ (and above by a similar
expression) from $\Ker df$, proving the claim.
\end{proof}

At the artificial boundary a bit more care is required, and it
requires an explicit discussion of scattering covectors and maps
related to them. Since elsewhere in the paper only the statement of
the lemma is used, we do not recall the background here in more
detail, but see for instance \cite{Uhlmann-Vasy:X-ray},
\cite{Stefanov-Uhlmann-Vasy:Boundary}. The argument presented below is
a modification, keeping track of potential degeneracies in
identifications, of the arguments discussed above for the case of
points away from the artificial boundary.

\begin{lemma}
Suppose that the hypothesis of Lemma~\ref{lemma:non-deg-diff-xi3} holds. 
For $A_{l,\digamma}$ corresponding to $E^2$ and $a_{33}$,
the quadratic
vanishing of the principal symbol is non-degenerate near the
artificial boundary as well.
\end{lemma}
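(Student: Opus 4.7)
The plan is to mirror the interior argument of the previous lemma, handling the artificial boundary by reading everything in terms of the scattering cotangent bundle. First I would identify the span of $df$ at $x=0$: in sc-fiber coordinates $\zeta = \zeta_3\,\tfrac{dx}{x^2} + \zeta'\cdot\tfrac{dy}{x}$, the scattering one-form $x^{-1}df$ has components $(x\pa_x f,\nabla_y f)$, and the hypothesis that $df$ is not conormal to the artificial boundary forces $\nabla_y f|_{x=0}\neq 0$; hence the span of $df$ at the boundary corresponds to the sc-line $\zeta_3=0,\ \zeta'\parallel\nabla_y f|_{x=0}$.

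Next, I would revisit the stationary-phase computation at $x=0$. The rescaled expansion \eqref{eq:aj-1-form-pre-xi} shows that the tangent vector entering the weight in the critical-set integrand reduces at $x=0$ to $\omega\cdot\pa_y$, the $x\hat\lambda\,\pa_x$ piece being lower order. Writing the critical-set condition \eqref{eq:phase-crit-set} as $\zeta_3\hat\lambda+\zeta'\cdot\omega=0$ and inserting a reference covector $\zeta_0 = c\,(0,\nabla_y f|_{x=0})$ in the span of $df$ shows that on the critical set $\nabla_y f\cdot\omega=0$, so by Lemma~\ref{lemma:non-deg-diff-xi3} the preimage $H_{\tilde x}^{-1}(\omega\cdot\pa_y)$ has $\tilde\xi_3=0$, precisely the locus where $\tfrac{\pa p_\pm}{\pa E^2}$ and $\tfrac{\pa p_\pm}{\pa a_{33}}$ vanish; thus the principal symbol vanishes at $\zeta_0$, as expected.

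The key step is the perturbation. Take $\zeta_\epsilon = \zeta_0 + \epsilon\nu$ with $\nu$ a unit sc-covector transversal to $\zeta_0$. The critical-set condition becomes
\[
(c\nabla_y f+\epsilon\nu')\cdot\omega + \epsilon\nu_3\hat\lambda=0,
\]
and decomposing $\omega = a\,\nabla_y f|_{x=0} + \omega_\perp$ with $\omega_\perp$ orthogonal to $\nabla_y f|_{x=0}$ and solving for $a$ in terms of $\omega_\perp$ and $\hat\lambda$ gives $|a|\sim\epsilon$ on a two-dimensional open subset of the critical set (with $\hat\lambda$ near $0$, where $\chi(0)>0$, and $\omega_\perp$ in a fixed range). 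Lemma~\ref{lemma:non-deg-diff-xi3} then yields $|\tilde\xi_3|\gtrsim\epsilon$ for the corresponding preimage under $H_{\tilde x}^{-1}$, while $|\tilde\xi'|$ is bounded away from zero by the earlier lemma (the relevant tangent vectors are transversal to the transverse-isotropy axis). Since $\tfrac{\pa p_\pm}{\pa E^2}$, respectively $\tfrac{\pa p_\pm}{\pa a_{33}}$, is a smooth, fixed-sign, non-degenerate multiple of $\tilde\xi_3^2$ away from $\tilde\xi'=0$, the integrand is bounded below in absolute value by $C\epsilon^2$ on this open set, and the principal symbol at $\zeta_\epsilon$ is $\gtrsim\epsilon^2$, yielding the non-degenerate quadratic vanishing.

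The main obstacle is the careful bookkeeping of scattering identifications at $x=0$: the $\pa_x$-component of $\dot\gamma$ degenerates while $\hat\lambda=\lambda/x$ remains a live parameter, the critical-set constraint and the span of $df$ must both be read in sc-fiber coordinates rather than ordinary cotangent coordinates, and Lemma~\ref{lemma:non-deg-diff-xi3} must be invoked uniformly up to $x=0$ so that the distance of $\omega\cdot\pa_y$ from $\Ker df$ controls $|\tilde\xi_3|$ uniformly near the artificial boundary.
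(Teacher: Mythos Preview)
Your argument is correct and reaches the same conclusion as the paper's, but via a more computational route. The paper handles the boundary identifications abstractly through the b-cotangent bundle: it introduces the maps $\pi: T^*M\to\Tb^*M$ and $\iota:\Tb M\to TM$, works with the perturbation $\zeta_\ep=\pi(df)+\ep\nu$ inside $\Tb^*_qM$, and argues geometrically that $\Ker\zeta_\ep$ and $\Ker\pi(df)$ meet at an angle $\sim\ep$, so that a vector $v_\ep\in\Ker\zeta_\ep$ near the distinguished $v\in\Ker df\cap T_q\pa M$ lies at distance $\sim\ep$ from $\Ker\pi(df)$; it then pushes forward via $\iota$ to reach $\Ker df$ in $TM$. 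You instead solve the critical-set equation $\zeta_3\hat\lambda+\zeta'\cdot\omega=0$ explicitly at $\zeta_\ep$, decompose $\omega=a\,\nabla_yf+\omega_\perp$, and read off $a\sim\ep$ directly. The underlying mechanism is identical; your version is more hands-on and avoids the b-bundle bookkeeping, while the paper's framework makes the degeneration of the map $\iota$ (kernel spanned by $x\pa_x$) and the role of $\pi(df)\in T^*_q\pa M$ more transparent.

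One small point to tighten: when you assert ``$|a|\sim\ep$ on a two-dimensional open subset of the critical set'', the lower bound $|a|\gtrsim\ep$ needs the linear functional $(\hat\lambda,\omega_\perp)\mapsto\nu_3\hat\lambda+\nu'\cdot\omega_\perp$ to be nontrivial, and this is precisely equivalent to $\nu$ being transversal to $\zeta_0=c(0,\nabla_yf)$. That equivalence is implicit in your setup but deserves a sentence. Also, the paper opens with a short case analysis ($\zeta_3\neq 0$ versus $\zeta_3=0$) locating the vanishing locus at the boundary exactly at the span of $x^{-1}df$; you check vanishing at $\zeta_0$ but lean on the $\ep^2$ lower bound for nonvanishing nearby, which suffices for the non-degeneracy claim as stated.
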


\begin{proof}
Consider a
scattering covector $\zeta=\zeta_3\frac{dx}{x^2}+\zeta'\cdot\frac{dy}{x}$ which is not in the span of
$x_3^{-1}\,df$, $x_3$ the convex level set function defining the
boundary. If $\zeta_3\neq 0$, the integral giving the principal symbol
contains contributions corresponding to the whole tangent
space of the boundary, which cannot lie completely in the kernel of
$df$ since $df$ is not conormal to the artificial boundary, so there
are points at which the weight is evaluated but $\tilde\xi_3\neq 0$.
On the other hand, if $\zeta_3=0$, i.e., we are working with a
scattering cotangent vector which is scattering cotangent to the
boundary, then as already discussed, there is a line, given by the
kernel of $\zeta'$, within the tangent space of the boundary
within which the weights get evaluated; this line needs to be in the
kernel of $df$ to lose ellipticity. But the kernel of $df$ within the
tangent space to the boundary is also one
dimensional (since $df$ is not conormal to the boundary), and includes
the kernel of the (non-zero!) projection of $df$, so it is exactly the
latter. Thus, ellipticity fails exactly if these two are the same,
i.e. exactly if $\zeta$ is a multiple of the image of $x_3^{-1}\,df$ in the
scattering cotangent bundle.

Again, the quadratic nature of the
vanishing of the principal symbol follows from the fixed, though
degenerate, sign of the principal symbol.

Finally, the non-degeneracy
of the quadratic vanishing can be seen by an argument broadly similar
to the one given above away from the boundary. Although we have
scattering pseudodifferential operators to consider, so their standard
principal symbols are homogeneous functions on the scattering
cotangent bundle with the zero section removed, it is beneficial to
work on the b-cotangent bundle: the two are related by a conformal
rescaling by $x_3$, so the cosphere bundles are exactly the same, and
utilizing the b-cotangent bundle we spare ourselves from explicitly writing $x_3$ or
$x_3^{-1}$ in many places: the relevant identification is the map
$\pi: T^*M\to\Tb^*M$ which is the adjoint of the smooth linear bundle map $\iota:\Tb M\to
TM$ which at $\pa M$ regards a vector tangent to $\pa M$ as simply a vector in
$T_{\pa M} M$, and thus is neither injective (the kernel is the span
of $x_3\pa_{x_3}$) nor surjective at the boundary. (The scattering
analogues, used in the first paragraph of the proof,
are $x_3^{-1}\pi$ and $\iota x_3^{-1}$.) Now, at each point $q$ in $\pa M$, $\pi(df)$ is an element of
$T^*_q\pa M$ (a well-defined subspace of $\Tb^*_q M$, unlike the case
of $T^*_q M$, within which the conormal bundle is well-defined!) thus
annihilates $\Nb_q M$ (a well-defined subspace of $\Tb_q M$ which is
spanned by $x_3\pa_{x_3}$), so the image under $\iota$ of
$\Ker\pi(df)$ is a line in $T_q M$ contained in $T_q\pa M$; this is
the line spanned by any nonzero vector $v$ in $\Ker(df)\cap T_q\pa M$. We again consider a family
$\zeta_\ep=\pi(df)+\ep \nu$ where $\nu\in\Tb^*_qM$, and we may assume
that $\nu$ is orthogonal to $\pi(df)$ with respect to an inner product
(dual b-metric) on $\Tb^*_qM$. Then $\Ker\zeta_\ep$ and $\Ker \pi(df)$
again meet in a line in an angle $\sim\ep$ (for $\ep\neq 0$, and the
angle is with respect to the aforementioned b-metric, though the
$\sim\ep$ statement is independent of the choice of the b-metric), and
the localization via the cutoff means that we are working in a fixed small neighborhood
of $T^*_q\pa M$ in $\Tb^*_q M$, which in particular includes a
neighborhood of the aforementioned $v$, in which, completely similarly
to above, in any fixed annulus (with respect to the b-metric) there
are points $v_\ep$ in $\Ker\zeta_\ep$ which are distance $\sim\ep$ from $\Ker
\pi(df)$. Since $\Ker\pi(df)$ contains the kernel of $\iota$, the
image under $\iota$ of $v_\ep$ is still
$\sim\ep$ distance away from $\iota(\Ker(\pi(df)))$. But this then
finally implies that $\iota(v_\ep)$ is distance $\sim\ep$ away from
$\Ker df$ itself, since this is a plane intersecting $T_q\pa M$ the
line $\iota(\Ker(\pi(df)))$ in a fixed non-zero angle. This shows that
the $\tilde\xi_3$ component of the covector corresponding to
$\iota(v_\ep)$ is $\geq C\ep$, $C>0$, which proves the non-degeneracy
as in the case away from the boundary.
\end{proof}

We also need to have an elliptic boundary principal symbol at finite 
points:

\begin{lemma}\label{lemma:finite}
Suppose that the gradient $\nabla f$ of the anisotropic layer function
 $f=\tilde x_3$ is neither parallel nor orthogonal to the
artificial boundary.
The boundary principal symbol for determining any one of
$a_{11},a_{33},E^2$ from $p_+$, as well as for determining $E^2$ from $p_-$, is elliptic at finite points.  For determining one of
$a_{11},a_{33}$ from $p_-$ the corresponding statement holds if
$E^2>0$.
\end{lemma}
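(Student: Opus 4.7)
The plan is to specialize the oscillatory-damped integral \eqref{eq:aj-1-form-pre-xi} to the artificial boundary $x=0$, where it represents the boundary principal symbol at a finite scattering covector $\zeta=\zeta_3\tfrac{dx}{x^2}+\zeta'\cdot\tfrac{dy}{x}$. At $x=0$ the weight reduces to $\tilde A^l(0,y,H^{-1}_{0,y}(0,\omega))$, so after performing the $(\hat t,\hat\lambda)$ integration, which is absolutely convergent thanks to the damping term $-\digamma(\hat\lambda\hat t+\alpha\hat t^2)$, one obtains a representation
\[
a_{l,\digamma}(0,y,\zeta)=\int K(y,\omega,\zeta,\digamma)\,\tilde A^l\bigl(0,y,H^{-1}_{0,y}(0,\omega)\bigr)\,d\omega.
\]
My first step is to verify, following the Gaussian-type choice of the cutoff $\chi$ used in \cite{Uhlmann-Vasy:X-ray,Stefanov-Uhlmann-Vasy:Boundary}, that $K$ is smooth and has strictly positive real part on a full neighborhood of $\omega=0$, uniformly in $\zeta$ on bounded sets.

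Next I would translate the geometric hypotheses on $\nabla f$ into pointwise statements about $\tilde\xi:=H^{-1}_{0,y}(0,\omega)$. By Lemma~\ref{lemma:non-deg-diff-xi3} the map $H_{0,y}$ is a local diffeomorphism near $\tilde\xi_3=0$, and as we have already seen it sends $\{\tilde\xi_3=0\}$ diffeomorphically onto (a neighborhood in) $\Ker df$, and $\{\tilde\xi'=0\}$ into the axis direction $\pa_{\tilde x_3}$. Thus the hypothesis that $\nabla f$ is not tangent to the artificial boundary rules out $\tilde\xi'=0$ on the support of the integrand, while the hypothesis that $\nabla f$ is not orthogonal to the boundary (so $\Ker df$ does not contain $T_q\pa M$) guarantees an open nonempty set of $\omega$ for which $\tilde\xi_3\neq 0$.

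Combining these observations with Lemma~\ref{lemma:material-derivs} handles each case uniformly: the weight $\frac{\pa p_+}{\pa a_{11}}$ is a strictly positive multiple of $|\tilde\xi'|^2$, hence strictly positive on the entire support of $K$; the remaining weights $\frac{\pa p_+}{\pa a_{33}}$, $\frac{\pa p_\pm}{\pa E^2}$, and (when $E^2>0$) $\frac{\pa p_-}{\pa a_{11}},\frac{\pa p_-}{\pa a_{33}}$ all carry a fixed sign, with strict definiteness on the open locus where $\tilde\xi_3\neq 0$ and $\tilde\xi'\neq 0$. In each instance $a_{l,\digamma}(0,y,\zeta)$ is the pairing of a fixed-sign, not identically zero continuous function against a kernel of strictly positive real part, and is therefore non-zero.

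The main obstacle I expect is the positivity of $K$ uniformly in finite $\zeta$: since $\zeta_3$ enters the phase through the complex coefficient $i\zeta_3-\digamma$ of the quadratic form in $(\hat t,\hat\lambda)$, naive stationary phase or contour shifting could in principle produce sign cancellations for large $\zeta_3$. The standard way around this, already used in \cite{Uhlmann-Vasy:X-ray,Stefanov-Uhlmann-Vasy:Boundary}, is to take $\chi$ to be a narrow Gaussian (or a convolution of a Gaussian with a compactly supported bump), so that the $(\hat t,\hat\lambda)$ integral collapses to an explicit complex Gaussian whose real part stays uniformly positive in $\zeta$, at the cost only of a harmless modification of the construction.
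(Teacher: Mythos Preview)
Your proposal is correct and follows essentially the same route as the paper's proof: both compute the boundary symbol at $x=0$ by replacing $\chi$ with a Gaussian as in \cite{Uhlmann-Vasy:X-ray,Stefanov-Uhlmann-Vasy:Boundary}, obtain a manifestly positive kernel against which the material derivative $\tfrac{\pa p_\pm}{\pa\nu_l}$ is integrated over the full circle of directions $\hat Y$, and then invoke the sign properties of Lemma~\ref{lemma:material-derivs} together with the two geometric hypotheses on $\nabla f$ to guarantee that the integrand has a fixed sign and is strictly nonzero on an open set. The paper is slightly more explicit, writing out the kernel as $(\zeta_3^2+\digamma^2)^{-1/2}\nu^{-1/2}e^{-(\hat Y\cdot\zeta')^2/(2\nu(\zeta_3^2+\digamma^2))}$ with $\nu=\digamma^{-1}\alpha$, which makes the positivity immediate and real (not merely positive real part), but your abstract formulation reaches the same conclusion by the same mechanism.
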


\begin{proof}
For this
we recall the computation from \cite{Uhlmann-Vasy:X-ray} in the form
used in \cite[Proof of
Lemma~3.5]{Stefanov-Uhlmann-Vasy:Boundary}. For this one again writes the
projected bicharacteristics in the form
$$
(x+\lambda t+\alpha
t^2+O(t^3),y+\omega t+O(t^2)),
$$
where
$\alpha=\alpha(x,y,\lambda,\omega)$. Further, one computes the
integral \eqref{eq:aj-1-form-pre-xi} at $x=0$ with a
Gaussian weight function in place of $\chi$ (which one eventually
approximates by a compactly supported $\chi$) with the parameter $\nu$, which we choose to be
$\nu=\digamma^{-1}\alpha$, $\digamma>0$ to be chosen sufficiently
large. This gives, e.g.\ for $E^2$,
$$
(\zeta_3^2+\digamma^2)^{-1/2}\int_{\sphere^1}\nu^{-1/2}e^{-(\hat
  Y\cdot\zeta')^2/(2\nu(\zeta_3^2+\digamma^2))}\,\frac{\pa p_\pm}{\pa E^2}(x,\xi)\,d\hat Y,
$$
where the covector $\xi$ is the image of the tangent vector
$(\lambda,\omega)=(0,\hat Y)$ under 
$H_x^{-1}$, the local inverse of $H_x$. Unlike for the case
of the standard principal symbol, for which a stationary phase
computation was used, here there is no critical set to restrict to,
i.e., we are integrating with $\frac{\pa p_\pm}{\pa E^2}$ evaluated at
the images of {\em all} tangent vectors to the artificial boundary. This expression is positive, resp.\ negative, if
$\frac{\pa p_\pm}{\pa E^2}(x,\xi)\geq 0$, resp.\  $\leq 0$, for all
relevant $\xi$, with the inequality definite for at least one of
them; the relevant $\xi$ are the images of $(0,\omega)$ under $H_x^{-1}$. But, taking into account
\eqref{eq:material-derivs}, this
is the case for both $p_+$ and $p_-$, with the definiteness coming
from $\tilde\xi'$ and $\tilde\xi_3$ both being non-zero at one such
image, since the non-parallel nature of $\nabla f$ to the
artificial boundary means that $\tilde\xi'$ indeed never vanishes on
the preimage, while the vanishing of $\tilde\xi_3$ at a point would
mean that the corresponding tangent vector is orthogonal to $\nabla
f$, which in turn cannot happen everywhere as $\nabla f$ is not orthogonal to the
artificial boundary. A completely analogous conclusion holds for
$\frac{\pa p_+}{\pa a_{11}}(x,\xi)$ and $\frac{\pa p_+}{\pa
  a_{33}}(x,\xi)$, and if in addition $E^2>0$, also for $\frac{\pa p_-}{\pa a_{11}}(x,\xi)$ and $\frac{\pa p_-}{\pa
  a_{33}}(x,\xi)$.
\end{proof}


{\em The conclusion is that, with the others taken as known,
the operator $e^{-\digamma/x}L\tilde Je^{\digamma/x}$ is elliptic at finite points for any one of
$E^2,a_{11},a_{33}$ for the qP-travel time data, and $E^2$ (as well as
$a_{11},a_{33}$ if $E^2>0$) from the qSV-travel time data,
while the standard principal symbol ellipticity holds for $a_{11}$
from the qP-travel time data. Hence, taking into account Proposition~\ref{prop:J-tilde-J}, $a_{11}$ can be recovered from
the qP-travel time data
under the hypothesis that the anisotropic layers are not aligned with
the convex foliation.} This proves Theorem~\ref{thm:p-sv}.

Corollary~\ref{thm:a11-fn} is a simple extension of this:

\begin{proof}[Proof of Corollary~\ref{thm:a11-fn}.] We suppose
that there is a functional relationship between $a_{11}, a_{33}, E^2$,
concretely, $a_{33} = F(a_{11})$ and $E^2 = H(a_{11})$, with $F, H$
smooth, and suppose that $F' \geq 0$. We claim that then the qP and qsV travel times
determine $a_{11}$ (and thus all the others).

To show this, we take the sum of the qP and qSV travel times. This
cancels the $\pm$ in the equations \eqref{eq:material-derivs}, as we
compute below. Namely, the effective coefficient in the
pseudolinearization for $a_{11}$ becomes
\begin{equation*}\begin{aligned}
&\frac{\pa p_+}{\pa a_{11}}+\frac{\pa p_+}{\pa a_{33}}
F'(a_{11})+\frac{\pa p_+}{\pa E^2}H'(a_{11})\\
&\qquad+\frac{\pa p_-}{\pa a_{11}}+\frac{\pa p_-}{\pa a_{33}}
F'(a_{11})+\frac{\pa p_-}{\pa E^2}H'(a_{11})\\
&=
2|\tilde\xi'|^2+2F'(a_{11})\tilde\xi_3^2\geq 2|\tilde\xi'|^2,
\end{aligned}\end{equation*}
so the above argument for recovering $a_{11}$ given the other
parameters works equally well. Indeed, if $F'>0$ then the right-hand
side can be replaced by $2|\tilde\xi|^2$, so in fact the above
argument for $a_{11}$ can be shortened somewhat.  Note that there is
no need for assuming anything about the derivative of $H$, only the
existence of such a functional relationship, since $H$ cancels from
the computation of the pseudolinearization coefficient at the
boundary. (But note that $H$ overall enters into the
pseudolinearization formula, so the {\em existence} of $H$ is
crucial.)
\end{proof}

\section{Determining more than one parameter at a time}

Of course, one would like to determine more than one of these
ideally. Since we have two linear transforms, given by $p_\pm$, and
since at some covectors the standard principal symbol behavior of
these transforms only involves evaluation of the material derivative
of $p_\pm$ at two points with identical behavior (antipodal), even
with further modifications, as mentioned above in analogy with the
tensor transform, {\em one cannot expect to recover all three in an
  elliptic manner}. However, it is reasonable to recover two of the
three (or all three if two determine the third in a suitable manner);
for this one needs a linear independence statement for the principal
symbols which now must be considered a 2 by 2 matrix, with the inputs
being the material parameters, the outputs the data for the different
wave types $p_+$ vs.\ $p_-$. (One will need slightly more to implement
this, again cf.\ the modifications of the transform mentioned above.)
For instance, $\frac{\pa p_\pm}{\pa E^2}(x,\xi)$ (with $+$ considered
the first row, $-$ the second row of a column vector) and either
$\frac{\pa p_\pm}{\pa a_{11}}(x,\xi)$ or $\frac{\pa p_\pm}{\pa
  a_{33}}(x,\xi)$ are certainly linearly independent as long as
$\tilde\xi_3\neq 0$ and $\tilde\xi'\neq 0$ since the two expressions
$\frac{\pa p_\pm}{\pa E^2}(x,\xi)$ are negatives of each other, which
is not the case for the other ones. In order to implement this, one
defines $L$ as
$$
Lv(z)=x^{-2}\int\chi(\lambda/x) \begin{pmatrix}C_{1+}(z,\lambda,\omega) &C_{1-}(z,\lambda,\omega)\\C_{2+}(z,\lambda,\omega) &C_{2-}(z,\lambda,\omega) \end{pmatrix}\begin{pmatrix}v_+(\gamma^+_{z,\lambda,\omega})\\v_-(\gamma^-_{z,\lambda,\omega})\end{pmatrix}\,d\lambda\,d\omega,
$$
where the first index of $C_{i\pm}$ refers to the parameter being
recovered (first vs.\ second) and $\pm$ to the type of wave being
used. Calling the parameters $\mu_1$ and $\mu_2$, we need that the
integral in $\omega$ of
$$
\begin{pmatrix}C_{1+}(z,0,\omega)
  &C_{1-}(z,0,\omega)\\C_{2+}(z,0,\omega)
  &C_{2-}(z,0,\omega) \end{pmatrix}
\begin{pmatrix}\frac{\pa p_+}{\pa \mu_1}(z,\xi)&\frac{\pa p_+}{\pa
    \mu_2}(z,\xi)\\ \frac{\pa p_-}{\pa \mu_1}(z,\xi)&\frac{\pa p_-}{\pa \mu_2}(z,\xi)\end{pmatrix}
$$
over the circle with a positive weight is elliptic; here $\xi=\xi(z,\omega)$ is
determined from $(0,\omega)$ as above. Now we can choose the $C$
matrix to be simply the transpose of the second, material sensitivity
matrix, at the actual boundary (where it is known!), and extend in a
smooth manner into the interior. Then the integrand is positive
definite over the boundary except where $\tilde\xi'=0$ or
$\tilde\xi_3=0$ (where it vanishes), thus has positive definite symmetric part
even nearby in the interior, thus the integral also has positive
definite symmetric part. This proves the ellipticity of the boundary
principal symbol at finite points, provided
$$
\begin{pmatrix}\frac{\pa p_+}{\pa \mu_1}(z,\xi)&\frac{\pa p_+}{\pa
    \mu_2}(z,\xi)\\ \frac{\pa p_-}{\pa \mu_1}(z,\xi)&\frac{\pa
    p_-}{\pa \mu_2}(z,\xi)\end{pmatrix}
$$
is full-rank, which holds, as discussed already, e.g.\ if $\mu_1=E^2$,
$\mu_2$ one of the other parameters. We reiterate that if e.g.\ one
assumes that $a_{33}$ is a function of $a_{11}$ and $E^2$ (rather than
a priori known), very similar arguments work; in this case, assuming
e.g.\ $a_{33}=\phi(a_{11})$, for the sake of an example, one simply
needs that
$$
\begin{pmatrix}\frac{\pa p_+}{\pa E^2}(z,\xi)&\frac{\pa p_+}{\pa
   a_{11}}(z,\xi)+\frac{\pa p_+}{\pa
   a_{33}}(z,\xi)\phi' \\ \frac{\pa p_-}{\pa E^2}(z,\xi)&\frac{\pa
    p_-}{\pa a_{11}}(z,\xi)+\frac{\pa
    p_-}{\pa a_{33}}(z,\xi)\phi'\end{pmatrix}
$$
is full rank, which is the case if $\phi'>0$.

\medskip\medskip

{\em In summary, the problem of determining $E^2$ and either one of
  $a_{11}$ and $a_{33}$ (or both if there is an a priori known
  relationship between them) from the qP and qSV data under the
  hypothesis that the anisotropic layers are not aligned with the
  convex foliation is always elliptic at finite points, and
  ellipticity fails only at scattering covectors aligned with the
  projection of the tilt axis to the boundary as well as at span of
  the tilt axis interior.}

\def\cprime{$'$} \def\cprime{$'$}


\begin{thebibliography}{1}

\bibitem{Chapman}
Chapman, C.H.
\newblock {\em Fundamentals of seismic wave propagation}.
\newblock Cambridge University Press, Cambridge, 2004.

\bibitem{Auld:Acoustic}
Auld, B.A.
\newblock {\em Acoustic fields and waves in solids}.
\newblock Wiley, New York, 1973.

\bibitem{Dahl:Finsler}
Dahl, M.
\newblock A brief introduction to Finsler geometry.\\
\newblock {\texttt{https://math.aalto.fi/\~{ }fdahl/finsler/finsler.pdf}}, 2006.

\bibitem{Digby-1981}
Digby, P.J.
\newblock The effective elastic moduli of porous granular
rocks.
\newblock {\em Journal of Applied Mechanics}, 48, 803-–808, 1981.

\bibitem{Eshelby-1957}
Eshelby, J.D.
\newblock The determination of the elastic field of an ellipsoidal
inclusion, and related problems.
\newblock {\em Proceedings of the Royal Society of London Series A -
  Mathematical and Physical Sciences} 241 (1226), 376--396, 1957.

\bibitem{Eshelby-1959}
Eshelby, J.D.
\newblock The elastic field outside an ellipsoidal inclusion.
\newblock {\em Proceedings of the Royal Society of London Series
A. Mathematical and Physical Sciences} 252 (1271), 561--569, 1959. 

\bibitem{Garnero-Moore-2004}
Garnero, E.J. and Moore, M.M.
\newblock Isotropy or weak vertical
transverse isotropy in D" beneath the Atlantic Ocean.
\newblock {\em Journal of Geophysical Research}, 109, B08308,
doi:10.1029/2004JB003004, 2004.

\bibitem{Guillemin-Uhlmann:Oscillatory}
Guillemin, V. and Uhlmann, G.
\newblock Oscillatory integrals with singular symbols.
\newblock {\em Duke Math. J.}, 48 (1), 251--267, 1981.

\bibitem{Hornby-Schwartz-Hudson-1994}
Hornby, B.E., Schwartz, L.M. and Hudson, J.A.
\newblock Anisotropic effective‐medium modeling of the elastic
properties of shales.
\newblock {\em Geophysics}, 59 (10), 1570--1583, 1994. 

\bibitem{RBMSpec}
Melrose, R.B.
\newblock {\em Spectral and scattering theory for the Laplacian on 
  asymptotically Euclidian spaces}. 
\newblock Marcel Dekker, 1994. 

\bibitem{Michel}
Michel, R.
\newblock Sur la rigidit\'e impos\'ee par la longueur des g\'eod\'esiques.
\newblock {\em Invent. Math.}, 65 (1), 71--83, 1981/82.

\bibitem{Norris-Callegari-Sheng-1985}
Norris, A.N., Callegari, A.J. and Sheng, P.
\newblock A generalized differential effective medium theory.
\newblock {\em Journal of the Mechanics and Physics of Solids}, 33
(6), 525--543, 1985.

\bibitem{Payton}
Payton, R.G.
\newblock {\em Elastic wave propagation in transversely isotropic media}.
\newblock Martinus Nijhoff Publishers, The Hague, 1983.

\bibitem{Romanowicz-Wenk-2017}
Romanowicz, B. and Wenk, H.-R.
\newblock Anisotropy in the deep Earth.
\newblock {\em Physics of the Earth and Planetary Interiors}, 269,
58-–90, 2017.

\bibitem{Schoenberg-Muir-1989}
Schoenberg, M. and Muir, F.
\newblock A calculus for finely layered anisotropic media.
\newblock {\em Geophysics} 54 (5), 581–-589, 1989.

\bibitem{Schoenberg-deHoop:Approximate}
Schoenberg, M. and De Hoop, M.V.
\newblock Approximate dispersion relations for qP-qSV waves in
transversely isotropic media.
\newblock {\em Geophysics 65} 919-933, 2000.

\bibitem{Silver-1996}
Silver, P.
\newblock Seismic anisotropy beneath the continents: Probing the
depths of geology.
\newblock {\em Annual Review of Earth and Planetary Sciences}
Vol. 24:385--432 (Volume publication date May 1996)
\texttt{https://doi.org/10.1146/annurev.earth.24.1.385} .

\bibitem{Stefanov-Uhlmann:Rigidity}
Stefanov, P. and Uhlmann, G.
\newblock Rigidity for metrics with the same lengths of geodesics.
\newblock {\em Math. Res. Lett.}, 5 (1-2), 83--96, 1998.

\bibitem{Stefanov-Uhlmann-Vasy:Boundary}
Stefanov, P., Uhlmann, G. and Vasy, A.
\newblock Boundary rigidity with partial data.
\newblock {\em J. Amer. Math. Soc.}, 29 (2), 299--332, 2016.

\bibitem{Stefanov-Uhlmann-Vasy:Rigidity-Normal}
Stefanov, P., Uhlmann, G. and Vasy, A.
\newblock Local and global boundary rigidity and the geodesic {X}-ray transform
  in the normal gauge.
\newblock {\em Preprint, arXiv:1702.03638}, 2017.

\bibitem{Thomsen}
Thomsen, L.
\newblock Weak elastic anisotropy.
\newblock {\em Geophysics}, 51, 1954--1966, 1986.

\bibitem{Tsvankin:Seismic}
Tsvankin, I.
\newblock {\em Seismic signatures and analysis of reflection data in
  anisotropic media}.
\newblock Elsevier Science Publishers, 2001.

\bibitem{Uhlmann-Vasy:X-ray}
Uhlmann, G. and Vasy, A.
\newblock The inverse problem for the local geodesic ray transform.
\newblock {\em Invent. Math.}, 205 (1), 83--120, 2016.

\bibitem{Walton-1987}
Walton, K.
\newblock The effective moduli of a random packing of spheres.
\newblock
{\em Journal of the Mechanics and Physics of Solids}, 33, 213-–226, 1987.

\bibitem{Zhang-Karato-1995}
Zhang, S., Karato, S.-I.
\newblock Lattice preferred orientation of olivine aggregates deformed
in simple shear.
\newblock {\em Nature} 375, 774–-777, 1995.

\end{thebibliography}
\end{document}